\theoremstyle{plain}
\newtheorem{Theorem}{Theorem}
\newtheorem{theorem}[Theorem]{Theorem}
\newtheorem{proposition}[Theorem]{Proposition}
\newtheorem{corollary}[Theorem]{Corollary}
\newtheorem{lemma}[Theorem]{Lemma}
\theoremstyle{definition}
\newtheorem{Definition}[Theorem]{Definition}
\newtheorem{example}[Theorem]{Example}
\newtheorem{definition}[Theorem]{Definition}
\newtheorem{remark}[Theorem]{Remark}
\theoremstyle{remark}
\newcommand{\N}{\mathbb{N}}     
\newcommand{\R}{\mathbb{R}}     
\def\r{\R}
\newcommand{\calA}{\mathscr{A}}
\newcommand{\calC}{\mathscr{C}}
\newcommand{\calK}{\mathscr{K}}
\newcommand{\calO}{\mathscr{O}}
\DeclareMathOperator{\cl}{cl}				
\DeclareMathOperator{\id}{id}				
\DeclareMathOperator{\supp}{supp}			
\newcommand{\TM}{TM} 
\newcommand{\M}{M} 
\newcommand{\QIkn}{\mathbf{QI}}
\newcommand{\TMkn}{\mathbf{TM}} 
\newcommand{\Qokn}{\mathbf{QI_{0}}} 
\newcommand{\ox}{\calO(X)}
\newcommand{\cx}{\calC(X)}
\newcommand{\kx}{\calK(X)}
\newcommand{\ax}{\calA(X)}
\newcommand{\bcx}{\kx}
\newcommand{\fvix}{C_0(X)}  				
\newcommand{\fcsx}{C_c(X)}    			
\newcommand{\cbx}{C_b(X)}                         	
\def\cfx{C(X)}
\def\B{{{\mathcal{B}}}}
\renewcommand{\O}{\emptyset}
\def\B{\mathcal{B}}
\def\sm{\setminus}
\def\cl{\overline}
\def\se{\subseteq}
\def\sc{\sqcup}
\def\bsc{\bigsqcup}
\def\bc{\bigcup}
\def\mr{\mu_{\rho}}
\def\eps{\epsilon}
\def\norm{\parallel}
\def\la1{\lambda_1}
\def\la2{\lambda_2}
\def\la0{\lambda_{0}}
\def\la{\lambda}
\def\al{A}
\def\alf{A(f)}
\def\goto{\rightarrow}
\begin{document}
\title{Quasi-linear functionals on locally compact spaces}
\author{S. V. Butler, University of California, Santa Barbara } 
\address{Department of Mathematics,
University of California Santa Barbara, 
552 University Rd., Isla Vista, CA 93117, USA } 
\email{svbutler@ucsb.edu }  
\keywords{quasi-linear functional, signed quasi-linear functional, singly generated subalgebra, topological measure}
\subjclass[2010]{Primary: 46E27, 46G99, 28A25; Secondary: 28C15}

\maketitle
\begin{abstract}
This paper has two goals: to 
present some new results that are necessary for further study and applications of quasi-linear functionals,
and, by combining known and new results, to serve as a convenient single source for anyone 
interested in quasi-linear functionals  on locally compact 
non-compact spaces or on compact spaces. We study signed and positive quasi-linear functionals 
paying close attention to singly generated subalgebras. The paper gives representation theorems for quasi-linear 
functionals on $\fcsx$ and for bounded quasi-linear 
functionals on $ \fvix$ on a locally compact space, and for quasi-linear 
functionals on $C(X)$ on a compact space. There is an order-preserving bijection 
between quasi-linear functionals and compact-finite topological measures, 
which is also "isometric" when topological measures are finite.
Finally, we further study properties of quasi-linear functionals and 
give an explicit example of a quasi-linear functional. 
\end{abstract}

\section{Introduction}

Quasi-linear functionals generalize linear functionals. They were first introduced on a compact space by
J. Aarnes in \cite{Aarnes:TheFirstPaper}. Since then many works devoted to quasi-linear functionals 
and corresponding set functions have appeared; the application of these functionals and set functions 
to symplectic topology has been studied in numerous papers 
(beginning with \cite{EntovPolterovich}) and in a monograph (\cite{PoltRosenBook}). 

In \cite{Aarnes:TheFirstPaper} Aarnes 
proved a representation theorem for quasi-integrals and studied their properties.
A much simplified proof of the representation theorem in the compact case was given by D. Grubb in a series of 
lectures, but it was never published. 
A. Rustad in \cite{Alf:ReprTh} first gave a proof of a representation theorem for positive quasi-integrals 
on functions with compact support when $X$ is locally compact. 
For a compact space,  D. Grubb proved a representation theorem for bounded signed quasi-linear functionals 
in \cite{Grubb:Signed}.

This paper, influenced by the abovementioned works, has two goals: 
to present improved versions of some known results and some new results that are necessary for further study 
and applications of quasi-linear functionals,
and, combining known and new results, to serve as a single source for anyone 
interested in learning about quasi-linear functionals  on locally compact 
non-compact spaces or on compact spaces. 
The paper (a) gives properties of signed and positive quasi-linear functionals; 
(b) presents in a unified way representation theorems for quasi-linear 
functionals on functions with compact support on a locally compact space, for bounded quasi-linear 
functionals on functions vanishing at infinity on a locally compact space, and for quasi-linear 
functionals on continuous functions on a compact space;
(c) gives an explicit example of a quasi-linear functional on a locally 
compact space. 
We use new and improved known results, including the description of singly generated subalgebras 
on locally compact spaces; 
a representation theorem for bounded quasi-linear functionals on functions vanishing at infinity; 
continuity with respect to topology of uniform convergence on compacta of quasi-integrals on   
functions with compact support, and uniform continuity of bounded quasi-integrals. These results are necessary for further study 
of quasi-linear functionals, signed quasi-linear functionals, and other related non-linear functionals.  

The paper is structured as follows. In Section \ref{SePrelim} we 
define signed and positive quasi-linear functionals on locally compact spaces, singly generated subalgebras,  
and topological measures. In Section \ref{SeQLF},
Lemma \ref{lineari1} and Lemma \ref{linearity},
we describe situations when functions belong to the same subalgebras, and, hence,
signed and positive quasi-linear functionals possess some linearity.  Then in Theorem \ref{mu to rho} 
we show how to construct quasi-linear functionals from topological measures and discuss some properties
of quasi-linear functionals. 
Our main focus is on two situations: 
quasi-linear functionals on $ \fvix$ when the topological measure is finite, and 
on $ \fcsx$ when the topological measure is compact-finite (see Definition \ref{MDe2}).
In Section \ref{SeReprTh} we build a topological measure 
from a quasi-linear functional (Theorem \ref{rho2muLC}), and then prove Representation Theorem \ref{art}. 
We show (Theorem \ref{ReprBij}) that there is an order-preserving bijection between quasi-linear functionals and 
compact-finite topological measures, 
which is also "isometric" when  topological measures are finite.  
In Section \ref{SeSvvaQLF} we further study properties of quasi-linear functionals, including uniform continuity and
continuity with respect to topology of uniform convergence on compacta, and we 
give an explicit example of a quasi-linear functional on $\r^2$. 

\section{Preliminaries} \label{SePrelim}

In this paper $X$ is a locally compact, connected space. 

By $\cfx$ we denote the set of all real-valued continuous functions on $X$ with the uniform norm, 
by $\cbx$ the set of bounded continuous functions on $X$,   
by $\fvix$ the set of continuous functions on $X$ vanishing at infinity,  and
by $\fcsx$ the set of continuous functions with compact support.

We denote by $\cl E$ the closure of a set $E$, and by $E^o$ the interior of $E$. 
We denote by $ \bsc$ a union of disjoint sets.
When we consider set functions into extended real numbers, they are not identically $\infty$. 

Several collections of sets will be used often.   They include:
$\ox$,  the collection of open subsets of   $X $;
$\cx$  the collection of closed subsets of   $X $;
$\kx$  the collection of compact subsets of   $X $;
$ \ax = \cx \cup \ox$. 

We denote by $1$ the constant function $1(x) =1$, 
by $id$ the identity function $id(x) = x$, and by $1_K$ the characteristic function of a set $K$.
By $ \supp f $ we mean $ \cl{ \{x: f(x) \neq 0 \} }$.

Given a measure $m$ on $X$  and a continuous map $ \phi :X \longrightarrow Y $ we denote 
by $\phi^{*} m$ the measure on $Y$ defined by  $\phi^{*} m = m \circ \phi^{-1} $ on open subsets of $Y$. 
In this case $\int_Y  g\,  d \phi^{*} m =  \int_X  g \circ \phi \, dm $ for any $g \in C(Y)$.

Here is an easy observation. 
Suppose $m_1, \ m_2$ are measures on $\r$ such that $| m_1(\{0\})|, \, | m_2(\{0\})| < \infty$ 
and  $m_1 (W) = m_2 (W)$ for any
open interval $W \in \r \sm \{0\}$.
Then for any integrable function $g$ with $g(0) = 0$  
\begin{align} \label{intG}
 \int_\r g \, dm_1 =   \int_\r g \, dm_2.
\end{align}

We recall the following fact (see, for example, \cite{Dugundji}, Chapter XI, 6.2):
\begin{lemma} \label{easyLeLC}
Let $K \subseteq U, \ K \in \bcx,  \ U \in \ox$ in a locally compact space $X$.
Then there exists a set  $V \in \ox$ with compact closure such that
$$ K \se V \se \cl V \se U. $$ 
\end{lemma}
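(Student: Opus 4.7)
The plan is to prove this by a standard local-to-global argument: first produce, around each point of $K$, a small open neighborhood whose closure is compact and still sits inside $U$, then cover $K$ by finitely many of these and take the union.

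First I would establish the one-point version: for each $x\in K\subseteq U$, there is an open set $V_x$ with $x\in V_x$, $\overline{V_x}$ compact, and $\overline{V_x}\subseteq U$. By local compactness (implicitly with the Hausdorff property, which is the standing convention here), $x$ has some open neighborhood $W$ with compact closure. Now $U\cap W$ is an open neighborhood of $x$ in the compact Hausdorff space $\overline{W}$, and in a compact Hausdorff space one-point sets are regularly separated from closed sets, so there is an open (in $\overline{W}$) set $V_x'$ with $x\in V_x'$ and $\overline{V_x'}^{\overline{W}}\subseteq U\cap W$. Intersecting with $W$ makes $V_x$ open in $X$, and since the closure in $\overline{W}$ of a set contained in $W$ agrees with its closure in $X$, we get $\overline{V_x}\subseteq U\cap W\subseteq U$, and $\overline{V_x}$ is compact as a closed subset of the compact set $\overline{W}$.

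Next I would use the compactness of $K$: the family $\{V_x : x\in K\}$ is an open cover of $K$, so there is a finite subcover $V_{x_1},\ldots,V_{x_n}$. Set
\[
V \;=\; \bigcup_{i=1}^{n} V_{x_i}.
\]
Then $V$ is open and $K\subseteq V$. Because finite unions commute with closures, $\overline{V}=\bigcup_{i=1}^{n}\overline{V_{x_i}}$, which is compact (a finite union of compact sets) and contained in $U$ (since each $\overline{V_{x_i}}\subseteq U$). This gives exactly the chain $K\subseteq V\subseteq\overline{V}\subseteq U$ with $\overline{V}$ compact.

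The only delicate step is the first one, locating $V_x$ inside $U$ with compact closure; everything after that is a routine finite-cover argument. Since the paper invokes this as a standard fact from Dugundji, no further subtlety is expected, but if one wanted to avoid appealing to regularity of compact Hausdorff spaces, an alternative is to take any relatively compact open neighborhood $W$ of $x$, observe that $\overline{W}\setminus U$ is compact and disjoint from $\{x\}$, and separate the two by disjoint open sets in $\overline{W}$, using one of them to build $V_x$.
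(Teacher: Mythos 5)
Your argument is correct and is exactly the standard proof of this fact, which the paper does not prove itself but simply cites from Dugundji (Chapter XI, 6.2): shrink a relatively compact neighborhood of each point of $K$ inside $U$ using regularity of the compact Hausdorff closure, then pass to a finite subcover and take the union. Both the pointwise step and the finite-union step are handled correctly, so there is nothing to add.
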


\begin{remark} \label{miscRg}
The space $X$ is connected, so for a bounded continuous function $f$  we have 
$\cl{f(X)} = [a,b]$ for some real numbers $a$ and $b$.
$\cl{f(X)} = f(X)$ when $X$ is compact. 
Let $X$ be locally compact but not compact. 
For $ f \in \fvix$ we have $0 \in \cl{f(X)} = [a,b]$.
It is easy to see that if $ f \in \fvix$ and $ \phi \in C([a,b])$ with $\phi(0) = 0$ then $\phi \circ f \in \fvix$;
on the other hand,  if $ \phi \in C([a,b])$ and we were to ask that  $\phi \circ f \in \fvix$ for all $f \in \fvix$ such that 
$\cl{f(X)} \se [a,b]$ then we would have $\phi(0) = 0$. 
\end{remark}

\begin{definition} \label{sigesuba}
Let $X$  be locally compact.
\begin{itemize}
\item[(a)] 
Let $f \in \cbx$. Define $\alf$  to be the smallest closed subalgebra 
of $\cbx$ containing $f$ and $1$. 
Hence, when $X$ is compact, we take  $f \in \cfx$ and define $\alf$  
to be the smallest closed subalgebra of $C(X)$ containing $f$ and $1$. 
We call $\alf$ the singly generated subalgebra  of $C(X)$ generated by $f$.
\item[(b)]
Let  $\B$ be a sublagebra of  $\cbx$, for example, $\fcsx$ or $\fvix$.  Define $B(f)$ to be the smallest closed subalgebra 
of $\B$ containing $f$. We call $B(f)$ the singly generated subalgebra  of $\B$ generated by $f$. 
\end{itemize}
\end{definition}

\begin{remark} \label{smsubalg}
When $X$ is compact  $\alf$ for $f \in C(X)$ contains all polynomials of $f$.  It is not hard to see that  $\alf$ has the form:
\[ \alf  = \{ \phi \circ f :  \phi \in C({f(X)}) \}. \]
Taking into account Remark \ref{miscRg} we see that when $X$ is locally compact  
and $\B = \fvix$ (or $\B = \fcsx$)  for $f \in \fvix$ (respectively, for  $f \in \fcsx$)  
its singly generated subalgebra  has the form:
\[ B(f) =  \{ \phi \circ f :  \phi(0) = 0, \ \phi \in C(\cl{f(X)}) \}. \]  
\end{remark}

We consider functionals on various subalgebras of
$C(X)$, such as $\fcsx$, $ \fvix$, $\cbx $ or $C(X)$.  

\begin{definition} \label{QI}
Let $X$ be locally compact, and let $\B$ be a subalgebra of $C(X)$
containing $\fcsx$.
A real-valued map $\rho$ on $ \B$ 
is called a signed quasi-linear functional on $\B$
if 
\begin{enumerate}[label=(QI\arabic*),ref=(QI\arabic*)]
\item \label{QIconsLC}
$\rho(a f) = a \rho(f)$ for $ a \in \r$
\item \label{QIlinLC}
for each  $h \in \B  $ we have:
$\rho(f + g) =  \rho (f) + \rho (g)$ for $f,g$ in the singly generated subalgebra $B(h)$ generated by $h$. \\
\noindent
We say that $\rho$ is a quasi-linear functional (or a positive quasi-linear functional)  if, in addition, 
\item  \label{QIpositLC}
$ f \ge 0 \Longrightarrow \rho(f)  \ge 0.$
\end{enumerate}
When $X$ is compact, we call $\rho$ a quasi-state if $\rho(1) =1$.
\end{definition}

\begin{remark} \label{constants}
If $X$  is compact, each singly generated subalgebra contains constants.
Suppose $\rho$ is a  quasi-linear functional on $C(X)$.Then
$$ \rho(f + c)  = \rho(f)  + \rho(c) = \rho(f) + c \rho(1) $$
for every constant $c$ and every $f  \in C(X)$.  
If $ \rho$ is a quasi-state then $\rho(f+c) = \rho(f) + c$.
\end{remark}

\begin{remark} 
A quasi-linear functional is also called a quasi-integral for reasons that will be apparent later; 
see Definition \ref{quasiint} below.
\end{remark}

\begin{remark} \label{notsigen} 
There are situations when the whole algebra $C_b(X)$ is generated by one function, 
in which case every quasi-linear functional is linear. 
This happens, for example, if $X= [a, b] \subset \r $:  the whole algebra $\cbx$ is singly generated
by the identity function:  
$$ \cbx = C(X)  = \{ \phi \circ id : \  \phi \in C( X) \} = \al (id (X)). $$
\end{remark} 

\begin{example} \label{S1notsg}
The following example is due to D. Grubb, \cite{Grubb:Lectures}.
Let $X = S^1 \subseteq \r^2.$ We shall show that $C(S^1)$ is not singly generated by any 
$f \in C(S^1).$  Suppose to the contrary that  $C(S^1)$ is singly generated by some function $f \in C(S^1)$.
Let $\pi_1$ and $\pi_2$ be the projections of $X$ 
onto the first and the second coordinates.  Then $\pi_1, \, \pi_2  \in C(S^1)$, and so 
$\pi_1 = \phi \circ f, \ \pi_2 = \psi \circ f$ for some functions $\phi, \psi \in C(f(S^1))$.  
Choose $x \in S^1$ such that $f(x) = f(-x)$. If $x \neq \pm \frac{\pi}{2}$ then 
$\pi_1(x) \neq \pi_1(-x)$, while also 
\[ \pi_1(x) = \phi \circ f(x) = \phi(f(x)) = \phi(f(-x)) = \phi \circ f (-x) = \pi_1(-x). \]
If $x = \pm \frac{\pi}{2}$ then $\pi_2(x) \neq \pi_2(-x)$, but also 
$\pi_2(x) = \psi \circ f (x) = \psi \circ f (-x) = \pi_2(-x)$.
In either case we get a contradiction. Therefore,  $C(S^1)$ is not singly generated by any function $f \in C(S^1).$  

Even though $C(S^1)$ is not singly generated, every quasi-linear functional on  $C(S^1)$ is linear.
This is because every topological measure on a compact space with the covering dimension $ \le 1$ 
is a measure. (See \cite{Wheeler}, \cite{Shakmatov}, and \cite{Grubb:SignedqmDimTheory}). 
\end{example}

\begin{definition} \label{normrho}
If $X$ is locally compact and $\rho$ is a positive quasi-linear functional on $\fvix$  we define
\[ \norm \rho \norm = \sup  \{ \rho(f): \ f \in \fvix, \  0 \le f \le 1\}.   \]  
Similarly, if $\rho$ is a positive quasi-linear functional on $ \fcsx$ we define
\[ \norm \rho \norm  =  \sup  \{ \rho(f): \ f \in \fcsx, \  0 \le f \le 1\}.   \]  
If $\norm \rho \norm < \infty$ we say that $ \rho$ is bounded. 
\end{definition}

\begin{remark} \label{Qenorm}
$ \norm \rho \norm $ satisfies the following properties: $ \norm \alpha \rho \norm  = \alpha  \norm \rho \norm  $ for $ \alpha > 0$, 
$ \norm \rho \norm  = 0$ iff $ \rho = 0$, and $ \norm \rho+ \eta \norm  \le  \norm \rho \norm  +  \norm \eta \norm$.
Thus, $ \norm \rho \norm $ has properties similar to properties of an extended norm, but it is defined 
on a positive cone.  
\end{remark}

\begin{Definition}\label{TMLC}
A topological measure on $X$ is a set function
$\mu:  \cx \cup \ox \to [0,\infty]$ satisfying the following conditions:
\begin{enumerate}[label=(TM\arabic*),ref=(TM\arabic*)]
\item \label{TM1} 
if $A,B, A \sc B \in \kx \cup \ox $ then
$
\mu(A\sqcup B)=\mu(A)+\mu(B);
$
\item \label{TM2}  
$
\mu(U)=\sup\{\mu(K):K \in \bcx, \  K \se U\}
$ for $U\in\ox$;
\item \label{TM3}
$
\mu(F)=\inf\{\mu(U):U \in \ox, \ F \se U\}
$ for  $F \in \cx$.
\end{enumerate}
\end{Definition} 

\noindent
For a closed set $F$, $ \mu(F) = \infty$ iff $ \mu(U) = \infty$ for every open set $U$ containing $F$.
 
\begin{definition} \label{MDe2}
A topological measure $ \mu$  on a locally compact space $X$ is:
\begin{itemize}
\item
 finite if $\mu(X) < \infty$.
\item
 compact-finite if $\mu(K) < \infty$ for any $ K \in \kx$.
\item
 $\tau-$ smooth on open sets if  $U_\alpha \nearrow U, U_\alpha, U \in \ox$ implies $\mu(U_\alpha) \rightarrow \mu(U)$.
\end{itemize}
We denote $ \norm \mu \norm = \mu(X)$. If $ \mu(X) < \infty$ we say that $ \mu$ is finite.
\end{definition}

\begin{remark}  \label{Menorm}
$ \norm \mu \norm $ has the following properties: $ \norm \mu \norm = 0 $ iff $ \mu =0$;
$ \norm  \alpha \mu + \nu  \norm = \alpha \norm \mu \norm   +  \norm \nu \norm $ for $ \alpha > 0$. 
Again, $ \norm \mu \norm $, defined on  a positive cone of all topological measures,  has properties 
similar to properties of an extended norm.
\end{remark}

We denote by $\TM(X)$ the collection of all topological measures on $X$,
and by $\M(X)$ the collection of all Borel measures on $X$ that are inner regular on open sets and outer regular 
(restricted to $\ox \cup \cx$). 

\begin{remark} \label{MTMprim}
Let $X$ be locally compact. We have:
\begin{align} \label{incluMTD}
 \M(X) \subsetneqq  \TM(X) 
\end{align}
For more information on proper inclusion, criteria for a topological measure to be a measure from $ \M(X)$, 
and examples of 
finite, compact-finite, and infinite topological measures, see Sections 5 and 6 in \cite{Butler:DTMLC}, 
Section 9 in \cite{Butler:TMLCconstr}, and \cite{Butler:TechniqLC}.
When $X$ is compact the proper inclusion in (\ref{incluMTD}) was first demonstrated in \cite{Aarnes:TheFirstPaper}; 
in fact, $\M(X)$ is nowhere dense in $ \TM(X)$ (see \cite{Density}). 
\end{remark}

\begin{remark} \label{DTM2TM}
If $X$ is locally compact, \ref{TM1} of Definition \ref{TMLC} is equivalent to the following two conditions:
$$ \mu(K \sc C) = \mu(K) + \mu(C), \ \ \ C, K  \in \kx,$$
$$ \mu(U) \le \nu(C)  + \mu(U \sm C), \ \ \ C \in \kx, \ \ U \in \ox.$$
This result follows from Theorem 28 in \cite{Butler:DTMLC}, but it was first observed in an equivalent form
for compact-finite topological measures in \cite{Alf:ReprTh}, Proposition 2.2.
\end{remark} 

\begin{remark} \label{DTMagree}
It is easy to check that a topological measure $\mu$  is monotone on $ \ox \cup \cx$ and that $\mu(\O) = 0$.
If $\nu$ and $\mu$ are topological measures that agree on $\kx$, then $\nu =\mu$.
If $\nu$ and $\mu$ are topological measures such that $\nu \le \mu$ on $\kx$ (or on $ \ox$) then $\nu  \le \mu$.
\end{remark}

The following properties of topological measures are proved (for a wider class of set functions) 
in \cite{Butler:DTMLC}, section 3; they generalize known properties of topological measures on a compact space.
\begin{lemma} \label{opaddDTM}
Let $X$ be a locally compact space.
\begin{itemize}
\item[(a)]
A topological measure is 
$\tau-$ smooth on open sets. In particular, a topological measure is additive on open sets. 
\item[(b)]
A topological measure $ \mu$ is superadditive, i.e. 
if $ \bsc_{t \in T} A_t \subseteq A, $  where $A_t, A \in \ox \cup \cx$,  
and at most one of the closed sets is not compact, then 
$\mu(A) \ge \sum_{t \in T } \mu(A_t)$. 
\end{itemize}
\end{lemma}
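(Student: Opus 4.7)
The plan is to prove the two parts in order, reducing part (b) via a passage to the supremum over finite subfamilies.

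For part (a), I first establish $\tau$-smoothness. Monotonicity of $\mu$ (Remark \ref{DTMagree}) gives $\lim_\alpha \mu(U_\alpha) \le \mu(U)$; for the converse I use \ref{TM2} together with compactness: for each compact $K \subseteq U$, the directed open cover $\{U_\alpha\}$ of $K$ contains some single $U_{\alpha_0} \supseteq K$ by compactness and directedness, whence $\mu(K) \le \mu(U_{\alpha_0}) \le \lim_\alpha \mu(U_\alpha)$, and taking the supremum over such $K$ yields $\mu(U) \le \lim_\alpha \mu(U_\alpha)$. Additivity on disjoint open sets $U_1 \sqcup U_2$ follows at once: approximating each $U_i$ from inside by compact $K_i \subseteq U_i$ via \ref{TM2} and applying \ref{TM1} to the disjoint compact union $K_1 \sqcup K_2$ gives the superadditive inequality; for the subadditive one, any compact $K \subseteq U_1 \sqcup U_2$ splits into $K \cap U_i$, each clopen in $K$ and hence compact, so \ref{TM1} yields $\mu(K) = \mu(K \cap U_1) + \mu(K \cap U_2) \le \mu(U_1) + \mu(U_2)$, and \ref{TM2} on $U_1 \sqcup U_2$ passes this to the union.

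For part (b), since $\sum_{t \in T} \mu(A_t) = \sup_{F \subseteq T\,\text{finite}} \sum_{t \in F} \mu(A_t)$ and the ``at most one non-compact closed set'' constraint is inherited by finite subfamilies, it suffices to handle a finite disjoint subfamily consisting of compacts $K_1,\ldots,K_p$, opens $U_1,\ldots,U_q$, and at most one possibly non-compact closed set $C$, all contained in $A$. Setting $C$ aside, I approximate each $U_i$ from inside via \ref{TM2} by a compact $L_i \subseteq U_i$; the $L_i$'s and $K_j$'s form a pairwise disjoint collection of compacts, and iterating \ref{TM1} produces
\[
\sum_{j=1}^p \mu(K_j) + \sum_{i=1}^q \mu(L_i) \;=\; \mu\bigl(\bsc_{i} L_i \sqcup \bsc_{j} K_j\bigr) \;\le\; \mu(A)
\]
by monotonicity. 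Letting $\mu(L_i) \nearrow \mu(U_i)$ through \ref{TM2} settles the $C$-free case (if some $\mu(U_i) = \infty$, the same compact approximation forces $\mu(A) = \infty$).

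Incorporating $C$ is the main delicacy. If $\mu(C) = \infty$, outer regularity \ref{TM3} forces $\mu(V) = \infty$ for every open $V \supseteq C$, so monotonicity (when $A$ is open) or \ref{TM3} applied to $A$ (when $A$ is closed) yields $\mu(A) = \infty$ and the claim is trivial. Otherwise $\mu(C) < \infty$, and setting $B := \bsc_{i} L_i \sqcup \bsc_{j} K_j$ (a compact set disjoint from the closed set $C$), Lemma \ref{easyLeLC} applied to $B \subseteq X \setminus C$ produces an open $W$ with compact closure satisfying $B \subseteq W \subseteq \cl W \subseteq X \setminus C$. When $A$ is open, $W \cap A$ and $A \setminus \cl W$ are disjoint open subsets of $A$ containing $B$ and $C$ respectively, so additivity on open sets from (a) together with monotonicity yields
\[
\mu(A) \;\ge\; \mu(W \cap A) + \mu(A \setminus \cl W) \;\ge\; \mu(B) + \mu(C).
\]
When $A$ is closed with $\mu(A) < \infty$, outer regularity \ref{TM3} replaces $A$ by an open $A' \supseteq A$ with $\mu(A') < \mu(A) + \epsilon$, the open case applies to $A'$, and sending $\epsilon \to 0$ finishes the argument; if $\mu(A) = \infty$ the inequality is trivial. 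A final supremum over $L_i \nearrow U_i$ through \ref{TM2} delivers $\sum_j \mu(K_j) + \sum_i \mu(U_i) + \mu(C) \le \mu(A)$. The main obstacle is this last separation step for the unique non-compact closed set, which is precisely why the hypothesis only allows one such set: compact additivity via \ref{TM1} is unavailable for non-compact closed pieces, and local compactness together with outer regularity is needed to bridge the gap.
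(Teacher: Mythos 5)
The paper does not actually prove Lemma \ref{opaddDTM}: it states the result and defers the proof to Section 3 of \cite{Butler:DTMLC}, where it is established for the wider class of deficient topological measures. So there is no in-paper argument to compare yours against; what you have written is a self-contained proof, and it is correct. Your part (a) is the standard argument ($\tau$-smoothness from \ref{TM2} plus compactness and directedness; binary additivity on open sets from splitting a compact $K\subseteq U_1\sqcup U_2$ into the clopen-in-$K$, hence compact, pieces $K\cap U_i$ and applying \ref{TM1}). In part (b) the reduction to finite subfamilies via the supremum definition of the sum, the inner compact approximation of the open $A_t$'s, and iterated \ref{TM1} on the resulting disjoint compacts are all sound; the genuinely nontrivial step — separating the single possibly non-compact closed set $C$ from the compact block $B$ by an open $W$ with $B\subseteq W\subseteq\cl W\subseteq X\sm C$ via Lemma \ref{easyLeLC}, then using additivity on the disjoint open sets $W\cap A$ and $A\sm\cl W$, with \ref{TM3} handling both the case $\mu(C)=\infty$ (via the remark following Definition \ref{TMLC}) and the passage from open to closed $A$ — is handled correctly and is exactly where the ``at most one non-compact closed set'' hypothesis gets used. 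The only cosmetic caveat is that ``additive on open sets'' in the lemma is meant to include disjoint unions of arbitrarily many open sets whose union is open; this follows from your binary additivity together with the $\tau$-smoothness you already proved, and it is worth one sentence to say so.
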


\section{Quasi-linear functionals} \label{SeQLF}
If $\rho$ is a quasi-linear functional 
we can not say  that $\rho(f+g) = \rho(f) + \rho(g)$ for arbitrary functions $f$ and $g$.  
However, we have the following  two lemmas. 
When $X$ is compact we take $\B = C(X)$. If $X$ is locally compact we may
take $\B$ to be  $\cbx$, or $\fcsx$ , or $\fvix$. By  singly generated subalgebra 
we mean $A(f) $ if $X$ is compact and $B(f)$ if $X$ is locally compact, as in 
Definition \ref{sigesuba} and Remark \ref{smsubalg}.

\begin{lemma}\label{lineari1}
Let $X$ be locally compact. 
\begin{enumerate}[label=s\arabic*.,ref=s\arabic*]
\item \label{truncaf1}
For any $ f \ge 0$ and any const $\delta >0$ the function $f_{\delta} = \inf \{ f, \delta\}$ 
is in the subalgebra generated by $f$. 
\item \label{rho product1}
If $f \cdot g =0, \ f,g \ge 0,  \ f,g \in \cbx$ then $f,g$ are in the subalgebra generated by $f-g$, 
and if $f \cdot g =0, \ f \ge 0, g \le0$ then $f,g$ are in the subalgebra generated by $f+g$.
In particular, 
for any $f \in \cbx$ the functions $f^{+}, f^{-}$ and $|f|$ are in the subalgebra generated by $f$. 
\item \label{apprCoCc1}
If $ f \in \fvix$ and $\eps >0$  then there is $h \in \fcsx$ such that $h$ is in the subalgebra
generated by $f$ and $\norm f - h \norm \le \eps$.
\item \label{bumpfs1}
If $0 \le g(x) \le f(x) \le c$ and $f=c$ on $\{ x: g(x) > 0\} $ then $g,f $ belong to the same
subalgebra generated by $f+g$. 
\end{enumerate} 
\end{lemma}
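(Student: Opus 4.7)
The plan is to invoke the explicit description of singly generated subalgebras from Remark \ref{smsubalg}: in every relevant setting (the compact case with $\al(h)$, or $\B = \fcsx$ or $\fvix$ with $B(h)$), the subalgebra generated by $h$ consists of the functions $\phi \circ h$ with $\phi$ continuous on $\cl{h(X)}$, subject to the extra constraint $\phi(0) = 0$ when $\B$ has no unit. Each of the four claims then reduces to exhibiting an explicit (piecewise linear) $\phi$ of the required type and checking continuity and the vanishing condition.

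For \ref{truncaf1}, take $\phi(t) = \min\{t, \delta\}$; since $\delta > 0$, $\phi$ is continuous with $\phi(0) = 0$, so $f_{\delta} = \phi \circ f$ lies in the subalgebra generated by $f$. For \ref{rho product1}, in the first case ($f, g \ge 0$, $fg = 0$) set $h = f - g$: disjointness of the supports forces $f = h^{+}$ and $g = h^{-}$, realized by $\phi_{+}(t) = \max(t, 0)$ and $\phi_{-}(t) = \max(-t, 0)$, both continuous and vanishing at $0$. In the second case ($f \ge 0$, $g \le 0$, $fg = 0$) set $h = f + g$; then $f = \phi_{+} \circ h$ and $g = \psi \circ h$ with $\psi(t) = \min(t, 0)$, again vanishing at $0$. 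The closing assertion about $f^{+}, f^{-}$, and $|f|$ for arbitrary $f$ is immediate from the same $\phi_{\pm}$ and the function $|\cdot|$ applied to $f$ itself.

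For \ref{apprCoCc1}, define $\phi_{\eps} \colon \R \to \R$ by $\phi_{\eps}(t) = \mathrm{sgn}(t) \cdot \max(|t| - \eps, 0)$; it is continuous on $\R$, sends $0$ to $0$, vanishes identically on $[-\eps, \eps]$, and satisfies $|\phi_{\eps}(t) - t| \le \eps$ for every $t$. Setting $h = \phi_{\eps} \circ f$ yields a function in the subalgebra generated by $f$ with $\norm h - f \norm \le \eps$ and $\supp h \se \{x : |f(x)| \ge \eps\}$, the latter being compact because $f \in \fvix$; hence $h \in \fcsx$. For \ref{bumpfs1}, let $h = f + g$: on $\{g = 0\}$ one has $h = f \in [0, c]$, whereas on $\{g > 0\}$ the hypothesis $f = c$ gives $h = c + g \in (c, 2c]$. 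The continuous maps $\phi_{f}(t) = \min\{t, c\}$ and $\phi_{g}(t) = \max\{t - c, 0\}$ both vanish at $0$ and satisfy $\phi_{f} \circ h = f$ and $\phi_{g} \circ h = g$, placing $f$ and $g$ in the subalgebra generated by $h = f + g$.

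None of the pieces is deep; the content is simply picking the right $\phi$ in each case. The most delicate step is \ref{apprCoCc1}, since one must simultaneously place $h$ in the subalgebra generated by $f$ and guarantee that $h$ has compact support. This is precisely why the cutoff is chosen at $\pm \eps$, and why the hypothesis $f \in \fvix$ enters in an essential way: vanishing at infinity is exactly what makes the superlevel sets $\{|f| \ge \eps\}$ compact.
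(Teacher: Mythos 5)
Your proposal is correct and, for parts \ref{truncaf1}, \ref{rho product1}, and \ref{bumpfs1}, uses exactly the same explicit piecewise-linear functions $\phi$ as the paper ($id\wedge\delta$, $id\vee 0$, $(-id)\vee 0$, $id\wedge 0$, $id\wedge c$, $0\vee(id-c)$), so there is nothing to add there. The only real divergence is in part \ref{apprCoCc1}: the paper first treats $f\ge 0$ by setting $h=f-f_{\eps}$ (so that $h$ is in the subalgebra by parts \ref{truncaf1} and closure under subtraction, and $\norm f-h\norm=\norm f_{\eps}\norm\le\eps$), and then handles general $f$ by splitting into $f^{+}$ and $f^{-}$, approximating each within $\eps/2$, and recombining. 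You instead apply the single symmetric soft-threshold $\phi_{\eps}(t)=\mathrm{sgn}(t)\max(|t|-\eps,0)$ directly to $f$, which is continuous, vanishes on $[-\eps,\eps]$, and satisfies $|\phi_{\eps}(t)-t|\le\eps$; this handles both signs at once, avoids the positive/negative decomposition entirely, and gives the sharper bound $\eps$ rather than $\eps/2+\eps/2$ assembled from two pieces. Both arguments rest on the same essential point, which you correctly identify: $f\in\fvix$ makes $\{|f|\ge\eps\}$ compact, so the thresholded function has compact support. Your version is a legitimate and slightly cleaner alternative; nothing is lost relative to the paper's proof.
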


\begin{proof}
\begin{enumerate}[label=s\arabic*.,ref=s\arabic*]
\item
Note that $f_{\delta} = (id \wedge \delta) \circ f$.
\item
Assume that  $f,g \ge0,  \  f  \cdot g = 0, \ f,g \in \cbx$. 
Consider  $ h = f-g$, and
notice that $f = (id \vee 0) \circ h$ and $g =((-id) \vee 0) \circ h$.
Thus $f, g$ belong to the subalgebra singly generated by $h$. 
If $f \ge 0, g \le0, \, f  \cdot g = 0$,  then $f,g$ are in the  subalgebra generated by $ h= f+g$, since 
$ f = (id \vee 0) \circ h,  g = (id \wedge 0) \circ h$.  
\item
Assume first that $ f \in \fvix$ and $f \ge 0$. For $\eps >0$
by part \ref{truncaf1}  the function $f_{\eps}$ belongs to the subalgebra
generated by $f$,  and then so does $h = f - f_{\eps}$. 
Note that $h$ is supported on the compact set
$\{ x : \ f(x) \ge  \eps \}$. So $ h \in \fcsx$ and $ \norm f-h \norm = \norm f_{\eps} \norm \le \eps$.\\
Now take any $f \in \fvix$ and $\eps > 0$. Choose $h^{+} \in \fcsx$ such that 
$\norm f^{+} - h^{+} \norm \le \displaystyle{\frac{\eps}{2}}$ and $h^{+}$ 
is in the subalgebra generated 
by $f^{+}$, and hence, by part  \ref{rho product1}, 
is in the subalgebra generated  by $f$. Similarly, choose 
$h^{-} \in \fcsx$ such that 
$\norm f^{-} - h^{-} \norm \le  \displaystyle{\frac{\eps}{2}}$ and $h^{-}$ 
is in the subalgebra generated  by $f$.
Let $h = h^{+} - h^{-}$. Then $ h \in \fcsx$, $h$  is in the subalgebra generated  by $f$, and 
\[ \norm f-h \norm = \norm f^{+} - f^{-} -h^{+} + h^{-} \norm  \le \norm f^{+} - h^{+} \norm + 
 \norm f^{-} - h^{-} \norm  \le \eps.\]
\item 
Note that $ c \ge 0$ and $f= (id \wedge c) \circ (f+g), \ g=(0 \vee (id-c)) \circ (f+g)$.
\end{enumerate}
\end{proof}

\begin{lemma}\label{linearity}
Let $X$ be locally compact. 
Let $\rho$ be a signed quasi-linear functional on a subalgebra of $\cbx$.
\begin{enumerate}[label=q\arabic*.,ref=q\arabic*]
\item \label{sumphi}
If $\phi_i \in C(\cl{f(X)}),  \ i=1, \ldots, n$  
(if $X$ is locally compact but not compact we also require $\phi_i (0) = 0$) 
and $\sum_{i=1}^n \phi_i = \id$ then 
$ \sum_{i=1}^n \rho(\phi_i \circ f) = \rho(f)$.
\item \label{rho product}
If $f \cdot g =0, \ f,g \in \cbx$ then $ \rho(f+g) = \rho(f) + \rho(g).$
In particular,
$\rho(f) = \rho(f^{+}) - \rho(f^{-})$ for any $f \in \cbx$.
\item \label{bumpfs}
If $0 \le g(x) \le f(x) \le c$ and $f=c$ on $\{ x: g(x) > 0\} $ then $ \rho(af + bg) = a\rho(f)  + b\rho(g)$ for any $a,b \in \r$.
\item \label{st2}
Suppose each singly generated subalgebra contains constants, and $ \rho(1) \in \r$.
Suppose $f, g \in \cbx$ and $ f =c$ on the set $\{ x: \ g(x) \neq 0\}$. Then
$\rho(f+g) = \rho(f) + \rho(g)$. 
\end{enumerate} 

If $ \rho$ is a positive quasi-linear functional then we have:
\begin{enumerate}[label=(\roman*),ref=(\roman*)]
\item \label{monosuba}
If $f$ and $g$ are from the same singly generated subalgebra and $f \ge g$ then
$\rho(f) \ge \rho(g)$.
\item \label{bumpfs2}
If $0 \le g(x) \le f(x) \le c$ and $f=c$ on $\{ x: g(x) > 0\} $ then  $\rho(f) \ge \rho(g)$ and $ \rho(af+bg) = a\rho(f) + b\rho(g)$ for any $a, b \in \r$.
\item \label{monot}
If $ f\ge g, \ f ,g \in \fcsx $ then $\rho(f) \ge \rho(g)$.
\item \label{monconst}
Suppose each singly generated subalgebra contains constants.
Suppose $f, g \in \cbx$ and $ f =c$ on the set $\{ x: \ g(x) \neq 0\}$. Then
$\rho(f+g) = \rho(f) + \rho(g)$. If $f \ge g$ then $\rho(f) \ge \rho(g)$. 
If $f \le g$ then $\rho(f) \le \rho(g)$. 
\end{enumerate}
\end{lemma}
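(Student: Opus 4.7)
The unified strategy across all parts is: use Lemma~\ref{lineari1} to place the relevant functions in a common singly generated subalgebra, then invoke QI2 for the additivity claims and QI3 for the positivity-based ones.

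For q1, every $\phi_i\circ f$ together with every partial sum $\sum_{i\le k}\phi_i\circ f$ lies in $B(f)$, so iterated application of QI2 yields $\sum_i\rho(\phi_i\circ f)=\rho(f)$. For q2 I would first handle the sign-constrained cases $f,g\ge 0$ and $f\ge 0\ge g$ directly from Lemma~\ref{lineari1}(s2) and QI2, and then treat the general $fg=0$ case by decomposing $f=f^+-f^-$ and $g=g^+-g^-$ and observing that the four parts have pairwise disjoint supports; the identity $\rho(f)=\rho(f^+)-\rho(f^-)$ is the special case $g=-f^-$. For q3, Lemma~\ref{lineari1}(s4) places $f,g\in B(f+g)$ and QI1--QI2 deliver $\rho(af+bg)=a\rho(f)+b\rho(g)$. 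For q4, set $h:=f-c$; then $h$ vanishes on $\{g\ne 0\}$, so $hg=0$ and q2 gives $\rho(h+g)=\rho(h)+\rho(g)$. Since every singly generated subalgebra contains constants, Remark~\ref{constants} then yields $\rho(f+g)=\rho(h+g)+c\rho(1)=\rho(f)+\rho(g)$.

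For the positive-functional part, (i) follows by writing $f=g+(f-g)$ in the common subalgebra $A(h)$, applying QI2, and using QI3 on $f-g\ge 0$. Part (ii) is (i) combined with Lemma~\ref{lineari1}(s4). Part (iv) combines the additivity from q4 with the monotonicity in (i) applied inside the relevant subalgebra containing constants.

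Part (iii), monotonicity on $\fcsx$, is the main obstacle, because generically $f,g\in\fcsx$ do not share a common singly generated subalgebra. My plan is first to reduce via q2 to the case $0\le g\le f$ using the pointwise inequalities $f^+\ge g^+$ and $f^-\le g^-$. For this sub-case I would approximate $g$ from below by the truncations $g_\delta:=(g-\delta)^+\in B(g)$, and for each $\delta>0$ construct a comparison function in $B(f)$ by composing $f$ with an auxiliary continuous $\phi_\delta$ satisfying $\phi_\delta(0)=0$, then take the lattice minimum with $f$ (which remains in $B(f)$, since $\min(\phi_\delta,\id)\circ f\in B(f)$), producing $h_\delta$ with $g_\delta\le h_\delta\le f$ and $h_\delta$ constant on a neighbourhood of $\{g>\delta\}$; bumpfs2~(ii) applied to $(g_\delta,h_\delta)$ and monosuba~(i) applied to $(h_\delta,f)$ inside $B(f)$ would then yield $\rho(g_\delta)\le\rho(f)$. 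The closing step is the passage $\delta\to 0$: since $g=g_\delta+(g\wedge\delta)$ lies inside $B(g)$, QI2 gives $\rho(g)-\rho(g_\delta)=\rho(g\wedge\delta)$, so the delicate technical point is to bound $\rho(g\wedge\delta)\to 0$ as $\delta\to 0$, which closes the argument.
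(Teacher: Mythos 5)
Your handling of q1--q4 and of parts (i), (ii), (iv) is correct and coincides with the paper's own proof: place the functions in a common singly generated subalgebra via Lemma~\ref{lineari1}, then apply \ref{QIlinLC} and \ref{QIpositLC}. The genuine gap is in part~\ref{monot}, and it is not the limit $\rho(g\wedge\delta)\to 0$ that you flag as delicate (that step is in fact recoverable without circularity: taking $h\in\fcsx$ with $0\le h\le 1$ and $h=1$ on $\supp g$, one has $0\le g\wedge\delta\le \delta h\le\delta$ with $\delta h$ equal to its maximum $\delta$ on $\{g\wedge\delta>0\}$, so part~\ref{bumpfs2} gives $\rho(g\wedge\delta)\le\delta\rho(h)\to 0$). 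The problem is the comparison function $h_\delta$ itself. You require simultaneously $g_\delta\le h_\delta\le f$ and $h_\delta$ constant on (a neighbourhood of) $\{g>\delta\}$ --- and constancy there is exactly what an application of part~\ref{bumpfs2} to the pair $(g_\delta,h_\delta)$ forces, since that part needs $h_\delta$ to equal its maximal value on all of $\{g_\delta>0\}=\{g>\delta\}$. These requirements pin that constant value $c'$ between $\sup_{\{g>\delta\}}(g-\delta)$ and $\inf_{\{g>\delta\}}f$. Take $f=g$ equal to a bump function in $\fcsx$ with range $[0,1]$: then $c'\ge 1-\delta$ and, since $g$ takes values arbitrarily close to $\delta$ on the open set $\{g>\delta\}$, also $c'\le\delta$ --- impossible for $\delta<1/2$. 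The pointwise inequality $f\ge g$ provides no single horizontal level separating $g_\delta$ from $f$ over the whole of $\{g>\delta\}$, so no one application of \ref{bumpfs2} can do the job.

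What the paper does instead is slice \emph{both} functions into layers of height $\delta$: with $\phi_i$ the ramp that vanishes on $(-\infty,(i-1)\delta]$, rises with slope $1$ on $((i-1)\delta,i\delta)$, and is constant $\delta$ on $[i\delta,\infty)$, one has $\sum_{i=1}^{n}\phi_i=\id$ on the relevant range for $n\delta$ large enough, and under the auxiliary hypothesis that $f\ge g+\delta$ wherever $g>0$ the implication $\phi_i(g(x))>0\Rightarrow\phi_i(f(x))=\delta$ holds; part~\ref{bumpfs2} then applies on each layer and part~\ref{sumphi} reassembles the layers into $\rho(g)\le\rho(f)$. The auxiliary hypothesis is removed not by shaving $\delta$ off $g$, as you propose, but by inflating $f$ to $f+\delta h$, where $h$ is chosen via part~\ref{bumpfs1} of Lemma~\ref{lineari1} to lie in the same singly generated subalgebra as $f$, so that $\rho(f+\delta h)=\rho(f)+\delta\rho(h)$ and letting $\delta\to 0$ closes the argument. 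Some such layer-cake decomposition is needed; the single-level comparison cannot be repaired.
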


\begin{proof}
\begin{enumerate}[label=q\arabic*.,ref=q\arabic*]
\item
Let $f \in \B$ and $\phi_i \in C(\cl{f(X)}), \ i=1, \ldots, n$ with $\sum_{i=1}^n \phi_i = id$. 
(If $X$ is locally compact but not compact, we also require $\phi_i (0) = 0$.)
Since $\phi_i \circ f $ for $i=1, \ldots, n$ belong to the singly generated subalgebra 
generated by $f$, we use the additivity of $\rho$ on the singly generated subalgebra:
\[  \sum_{i=1}^n \rho(\phi_i \circ f) =\rho(\sum_{i=1}^n \phi_i \circ f) = \rho(id \circ f)  = \rho(f).\] 
\item
From part \ref{rho product1} of Lemma \ref{lineari1} it follows that  
$\rho(f+g) = \rho(f) + \rho(g)$ when  $f,g \ge0,  \  f  \cdot g = 0, \ f,g \in \cbx$. 
In the general case, if $ f \cdot g =0$ then $ (f+g)^{+} = f^{+} + g^{+}, \   (f+g)^{-} = f^{-} + g^{-}, $ and 
$ f^{+} \cdot g^{+} =0,  \ f^{-} \cdot g^{-} =0 .$ Thus, for example,
$\rho(f^{+} + g^{+}) = \rho(f^{+}) + \rho(g^{+})$.
Then
\begin{eqnarray*}
\rho(f+g) & =& \rho((f+g)^{+}) - \rho((f+g)^{-})  
= \rho(f^{+} + g^{+}) -\rho(f^{-} + g^{-})  \\
&=& \rho(f^{+}) + \rho(g^{+}) - \rho(f^{-}) - \rho(g^{-}) = \rho(f) + \rho(g) 
\end{eqnarray*}
\item 
Follows from part \ref{bumpfs1} of Lemma \ref{lineari1}.
\item
Note that $ (f-c) \cdot g =0.$ Using Remark \ref{constants} and  part \ref{rho product}
we get:
\begin{eqnarray*}
\rho(f+g)  &=& \rho(f+g-c) + \rho(c) = \rho((f-c) +g) + \rho(c)  \\
&=& \rho(f-c) + \rho(g) + \rho(c)  = \rho(f) -\rho(c) + \rho(g)  + \rho(c) \\
&=& \rho(f) + \rho(g) 
\end{eqnarray*}
\end{enumerate} 

\begin{enumerate}[label=(\roman*),ref=(\roman*)]
\item 
Using additivity of $\rho$ on singly generated subalgebras and the positivity of $\rho$
we have:
\[ \rho(f) - \rho(g) = \rho(f-g) \ge 0.\] 
\item
Follows from part \ref{monosuba} and Lemma \ref{lineari1}, part \ref{bumpfs1}.  
\item
Given $\delta >0$, suppose first  that  $g \ge 0$ and that $ f(x) \ge g(x) + \delta$ when
$g(x)> 0$. Choose $n \in \N$ such that $n \delta >  2 \ max f$ and define functions 
$\phi_i, \ i=1, \ldots, n$ by 
\begin{eqnarray*}
  \phi_i (x) & = &
  \left\{
  \begin{array}{rl}
  0 & \mbox{ if }  x \le (i-1) \delta \\
  x - (i-1) \delta  & \mbox{ if } (i-1) \delta < x < i\delta \\
  \delta  & \mbox{ if }  x \ge  i\delta. \\
  \end{array}
  \right.
\end{eqnarray*} 
Then $\phi_i (g(x)) >0$ implies $\phi_i (f(x)) = \delta$, so by part \ref{bumpfs2} we have
$\rho(\phi_i (g)) \le \rho(\phi_i (f))$. Since $\sum_{i=1}^n \phi_i = \id$, 
by part \ref{sumphi} we have: 
\[  \rho(g) =  \sum_{i=1}^n \rho(\phi_i (g))\le \sum_{i=1}^n \rho(\phi_i (f)) = \rho(f).\]
Suppose now that  $0 \le g \le f$. Let $c= max f$.
Choose $h \in \fcsx$ such that $0 \le h(x) \le c$ and
$h(x) = c $ when $f(x) >0$. Given $\eps>0$ choose $\delta$ such that $\delta \rho(h) = \rho(\delta h) < \eps$.
Then by part \ref{bumpfs1} of Lemma \ref{lineari1} $f$ and $h$ (and also $\delta h$)  belong to the same singly 
generated subalgebra, so $\rho(f + \delta h) = \rho(f) + \rho(\delta h)$.  
By the argument above we have:
\[ \rho(g) \le \rho(f + \delta h) = \rho(f) + \rho(\delta h)  < \rho(f) + \eps.  \]
So $\rho(g) \le \rho(f)$. \\
Now suppose that $g \le f$. Since $g^+ \le  f^+, \  f^- \le g^-$, using 
part \ref{rho product} we have:
\[ \rho(g) = \rho(g^+) - \rho(g^-) \le \rho(f^+) - \rho(f^-) = \rho(f).\] 
\item
By part \ref{st2} we have $ \rho(f+g) = \rho(f) + \rho(g)$.
Now assume that $f \ge g$.
We have $ f =c$ on the set $\{ x: \  -g(x) \neq 0\}$, 
so $\rho(f-g) = \rho(f) +\rho(- g) = \rho(f) - \rho(g)$.  Since $\rho$ is positive, we have
$\rho(f) -\rho(g) = \rho(f-g) \ge 0$, i.e. $\rho(f) \ge \rho(g)$.
\end{enumerate}
\end{proof}

\begin{remark}
The proofs of part \ref{bumpfs1} of Lemma \ref{lineari1} and part \ref{monot} of Lemma \ref{linearity} follow 
Lemma 2.4 and Proposition 3.4 in \cite{Alf:ReprTh}. The proof of part \ref{rho product1} of Lemma \ref{lineari1} is from 
\cite{Grubb:Lectures}. 
\end{remark} 
 
\begin{remark}
In Lemma \ref{Monotrho} below we will improve part \ref{monot} of Lemma \ref{linearity}. 
\end{remark}

Let $\mu$ be a topological measure on $X$.  Our goal is to construct 
a quasi-linear functional on $X$ using $\mu$.

\begin{definition} \label{distrF}
Let $X$ be locally compact and $\mu$ be a topological measure on $X$. 
Define $F$,  a distribution function of $f$ with respect to $\mu$, as follows:
\begin{itemize}
\item[(A)]
If $\mu(X) < \infty$ and  $f \in C(X)$, let
$$ F(a) =\mu( f^{-1} (a, \infty)).$$
\item[(B)]
If $ \mu$ is compact-finite and $f \in \fcsx$, let 
$$ F(a) = \mu( f^{-1} ((a, \infty) \sm \{0\}). $$
\end{itemize}
\end{definition}

\begin{lemma} \label{meas muf}
The function $F$ on $\r$ in Definition \ref{distrF}  has the following properties:
\begin{enumerate}[label=(\roman*),ref=(\roman*)]
\item \label{Ffini}
$F$ is real-valued,  and in case (A) $0 \le F \le \mu(X)$,  while in case (B) $0 \le F \le \mu(\supp f)$.
\item
If $f$ is bounded then $F(a) =0$ for all $a \ge  \norm f \norm$.
\item
$F$ is  non-increasing.
\item
$F$ is right-continuous.
\end{enumerate}
\end{lemma}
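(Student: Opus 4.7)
The plan is to reduce everything to elementary properties of $\mu$ on open subsets of $X$. Set
\[
U_a \;=\; \begin{cases} f^{-1}(a,\infty) & \text{in case (A),} \\ f^{-1}\bigl((a,\infty)\setminus\{0\}\bigr) & \text{in case (B).}\end{cases}
\]
In both cases $(a,\infty)$ and $(a,\infty)\setminus\{0\}$ are open subsets of $\R$, so by continuity of $f$ the set $U_a$ is open in $X$, and $F(a) = \mu(U_a)$. The non-negativity in (i) is then automatic, and parts (i)--(iii) become easy consequences of monotonicity and finiteness of $\mu$ on the relevant open sets.

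For (i) in case (A), $U_a \subseteq X$ and $\mu(X) < \infty$ give $0 \le F(a) \le \mu(X)$. In case (B), note that $U_a \subseteq f^{-1}(\R \setminus\{0\}) \subseteq \supp f$, which is compact; since $\mu$ is compact-finite and monotone on $\ox \cup \cx$ (Remark \ref{DTMagree}), $0 \le F(a) \le \mu(\supp f) < \infty$. For (ii), if $a \ge \norm f \norm$ then no point $x$ satisfies $f(x) > a$, so $U_a = \emptyset$ and $F(a) = \mu(\emptyset) = 0$. For (iii), if $a \le b$ then $(a,\infty) \supseteq (b,\infty)$ (and likewise after removing $\{0\}$ in case (B)), whence $U_a \supseteq U_b$ and monotonicity of $\mu$ yields $F(a) \ge F(b)$.

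The only step requiring real content is (iv), right-continuity, and this is where I would use $\tau$-smoothness of $\mu$ on open sets (Lemma \ref{opaddDTM}(a)). Since $F$ is non-increasing by (iii), it suffices to show $F(a + 1/n) \to F(a)$ as $n \to \infty$. I would start from the identity $(a,\infty) = \bigcup_{n} (a+1/n,\infty)$ (which persists after intersecting with $\R\setminus\{0\}$ in case (B)), and take preimages under $f$ to obtain
\[
U_a \;=\; \bigcup_{n} U_{a+1/n},
\]
an increasing union of open subsets of $X$. Applying $\tau$-smoothness gives $\mu(U_{a+1/n}) \nearrow \mu(U_a)$, i.e.\ $F(a + 1/n) \nearrow F(a)$. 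Combined with monotonicity of $F$, this yields $\lim_{b \to a^+} F(b) = F(a)$, establishing right-continuity. The main (and really only) obstacle is verifying that the preimage formula above gives a legitimate increasing open exhaustion so that $\tau$-smoothness applies cleanly in both cases (A) and (B); everything else is bookkeeping.
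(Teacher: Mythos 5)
Your proof is correct and follows the same route the paper intends: parts (i)--(iii) from monotonicity of $\mu$ on open sets together with finiteness of $\mu(X)$ (case (A)) or compact-finiteness applied to $\supp f$ (case (B)), and part (iv) from $\tau$-smoothness of $\mu$ on the increasing open exhaustion $U_a = \bigcup_n U_{a+1/n}$, which is exactly the appeal to Lemma \ref{opaddDTM} that the paper's one-line proof makes. No gaps.
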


\begin{proof}
The right continuity of $F$ follows from Lemma \ref{opaddDTM}. The rest is easy.
\end{proof}

\begin{lemma} \label{mfmeas}
Let $\mu$ be a  topological measure on  a locally compact space $X$.
\begin{itemize}
\item[(A)] 
If $\mu(X) < \infty$ and  $f \in \fvix$ then there exists a finite measure 
$m_f$ on $\r$ 
such that  
\[ m_f (W)  = \mu(f^{-1}(W )) \text{   for every open set   } W \in \r . \]
\item[(B)]
If $\mu$ is compact-finite and  $f \in \fcsx$
then there exists a compact-finite measure $m_f$ on $\r$  
such that  
\[ m_f (W)  = \mu(f^{-1}(W \sm \{0\} )) \text{   for every open set   } W \in \r . \]
Thus, 
\[ m_f (W)  = \mu(f^{-1}(W)) \text{   for every open set   } W \in \r \sm \{0\} . \]
\end{itemize}
In either case, $m_f$ is the Stieltjes measure on $\r$ associated with $F$ given by Definition \ref{distrF}, 
and  $ \supp m_f  \se \cl{f(X)} $.
\end{lemma}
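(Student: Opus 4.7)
The plan is the standard Lebesgue--Stieltjes construction from $F$, followed by identification on open sets. By Lemma \ref{meas muf}, $F$ is non-negative, bounded, non-increasing, right-continuous, with $F(a) \to 0$ as $a \to \infty$; so the prescription $m_f((a,b]) = F(a) - F(b)$ for $a<b$ extends uniquely to a regular Borel measure $m_f$ on $\R$. In case (A) its total mass equals $\lim_{a \to -\infty} F(a) = \mu(X)$ by $\tau$-smoothness applied to $f^{-1}((a,\infty)) \nearrow X$, making $m_f$ finite; in case (B), the bound $F \le \mu(\supp f) < \infty$ yields compact-finiteness of $m_f$.

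Next I reduce the desired identity to the case of open intervals. Every open $W \se \R$ is a countable disjoint union of open intervals, and both $m_f$ and the set function $W \mapsto \mu(f^{-1}(W))$ in case (A), respectively $W \mapsto \mu(f^{-1}(W \sm \{0\}))$ in case (B), are countably additive on such decompositions: $m_f$ by construction, and the $\mu$-side using \ref{TM1} for finite additivity on finite disjoint unions of open sets (whose union is open) together with $\tau$-smoothness from Lemma \ref{opaddDTM} to pass to countable unions. So it suffices to check the identity on open intervals. For half-lines $(c, \infty)$ the identity $m_f((c, \infty)) = F(c)$ follows directly from the definitions of $m_f$ and $F$.

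The main obstacle is a bounded open interval $(a, b)$: on one side $m_f((a, b)) = F(a) - F(b^-)$ where $F(b^-) = \lim_{c \nearrow b} F(c)$, while on the other $\mu(f^{-1}((a, b))) = \lim_{c \nearrow b} \mu(f^{-1}((a, c)))$ by $\tau$-smoothness, so matching the two reduces to proving $\mu(f^{-1}((a, c))) = F(a) - F(c)$ for each $a<c<b$. This is the additivity statement
\[ \mu(f^{-1}((a, \infty))) = \mu(f^{-1}((a, c))) + \mu(f^{-1}([c, \infty))) \]
on the disjoint decomposition of $f^{-1}((a, \infty))$ as open plus closed equaling open; superadditivity from Lemma \ref{opaddDTM} gives one direction, and the reverse follows by combining \ref{TM3} outer regularity of the closed piece $f^{-1}([c, \infty))$ by open sets with inner compact approximation \ref{TM2} of $f^{-1}((a, c))$ and applying \ref{TM1} to the resulting open-plus-compact decomposition of $f^{-1}((a, \infty))$. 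In case (B) with $0 \in (a, b)$, I split $(a, b) \sm \{0\} = (a, 0) \sc (0, b)$ and apply the case-(A) argument to each piece.

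Finally, the support statement is immediate: if $c \notin \cl{f(X)}$, choose an open neighborhood $W$ of $c$ disjoint from $f(X)$; then $f^{-1}(W) = \emptyset$ (and likewise after removing $\{0\}$), so $m_f(W) = 0$ by the identification above, placing $c$ in the open complement of $\supp m_f$.
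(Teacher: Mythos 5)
Your overall skeleton --- build the Stieltjes measure $m_f$ from $F$, reduce the identity on a general open set to open intervals via additivity on open sets and $\tau$-smoothness, and then fight the real battle on a bounded interval --- matches the paper's. But the step where you fight that battle contains a genuine error. You claim $\mu(f^{-1}((a,c))) = F(a) - F(c)$ and say this ``is the additivity statement'' $\mu(f^{-1}((a,\infty))) = \mu(f^{-1}((a,c))) + \mu(f^{-1}([c,\infty)))$. Even granting that additivity statement (it does hold when $f^{-1}([c,\infty))$ is compact, by superadditivity together with the inequality in Remark \ref{DTM2TM}), it yields $\mu(f^{-1}((a,c))) = F(a) - \mu(f^{-1}([c,\infty)))$, and $\mu(f^{-1}([c,\infty)))$ is \emph{not} $F(c) = \mu(f^{-1}((c,\infty)))$: the two differ by the mass $\mu$ places on the level set $f^{-1}(\{c\})$. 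Concretely, let $\mu$ be the Dirac measure at a point $x_0$ and let $f$ attain the value $c>0$ at $x_0$; then $F(a)-F(c) = 1 - 0 = 1$ while $\mu(f^{-1}((a,c))) = 0$. So the identity you reduce to is false precisely when $F$ jumps at $c$, which is exactly the situation a Stieltjes argument must be able to handle.

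The correct intermediate identity is $\mu(f^{-1}((a,c))) = F(a) - F(c^-)$, and establishing it is where the actual work lies: one inequality comes from superadditivity applied to $f^{-1}((a,t)) \sqcup f^{-1}((t,\infty)) \se f^{-1}((a,\infty))$ followed by $t \nearrow c$, and the other from the three-piece decomposition $f^{-1}((a,\infty)) = f^{-1}((a,c)) \sqcup f^{-1}(\{c\}) \sqcup f^{-1}((c,\infty))$, whose middle piece is compact (this is where $f$ vanishing at infinity and, in case (B), $c \ne 0$ enter), combined with $f^{-1}(\{c\}) \sqcup f^{-1}((c,\infty)) \se f^{-1}((t,\infty))$ for $t<c$ and again $t \nearrow c$. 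This is exactly the paper's argument. With the corrected identity your final limit $c \nearrow b$ still closes, since $F(c^-) \to F(b^-)$, so the repair is local; but as written your proof asserts a false claim and the single application of additivity at $c$ cannot substitute for the left-limit argument. Two smaller omissions: your interval decomposition can produce half-lines of the form $(-\infty,c)$, which you never treat; and in case (A) with $c \le 0$ the set $f^{-1}([c,\infty))$ need not be compact, so even the additivity statement you invoke requires care there --- another reason to argue through open sets and limits rather than through a closed piece.
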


\begin{proof}
We will give the proof for case (B). The argument for case (A) is similar but simpler. 

Let  $f \in \fcsx$. 
Let  the function $F$ on $\r$ be as in Definition \ref{distrF} and Lemma \ref{meas muf}.
Let $m_f$ be the Stieltjes measure on $\r$ associated with $F$. 
We shall show that $m_f$ is the desired measure. 
First, consider open subsets of $\r$ of the form $(a,b)$.
Note that $m_f ((a, b))  =  F(a) - F(b^-) $. 
i.e. we shall show that
\[ F(a) - F(b^-)  = m_f ((\alpha, \beta))  = \mu (f^{-1}( (a,b) \sm \{0\})).\]
For any $t \in (a,b)$ we have by Lemma \ref{opaddDTM}:
\[  \mu (f^{-1}( (a, \infty) \sm \{0\})) \ge  \mu (f^{-1}( (a, t) \sm \{0\})) + 
 \mu (f^{-1}( (t, \infty) \sm \{0\})), \]
i.e. 
\[F(a) \ge F(t) +   \mu (f^{-1}((a,t) \sm \{0\})). \]
By Lemma \ref{opaddDTM} we see that  
$\mu (f^{-1}( (a,t) \sm \{0\}))  \to  \mu (f^{-1}( (a,b) \sm \{0\})) $ as $ t \to b^-$.
Therefore, as   $ t \to b^-$ we have:
\[ F(a) - F(b^-) \ge \mu (f^{-1}( (a,b) \sm \{0\})).\]
Now we shall show the opposite inequality. Note that in 
\[  f^{-1}( (a, \infty) \sm \{0\}) = f^{-1}( (a,b) \sm \{0\}) \sc f^{-1}( \{b\} \sm \{0\})
\sc f^{-1}( (b, \infty) \sm \{0\}), \]
all the sets are open except for the middle set on the right hand side, 
which is compact since $f \in \fvix$. 
Applying $\mu$ we obtain by \ref{TM1} of Definition \ref{TMLC}
\begin{align} \label{FFF}
F(a) = \mu( f^{-1}( (a,b) \sm \{0\}))  + \mu(f^{-1}( \{\ b \} \sm \{0\})) + 
\mu(f^{-1}( (b, \infty) \sm \{0\}))
\end{align}
Since for any $t < b$
 \[ f^{-1}( (b, \infty) \sm \{0\}) \sc f^{-1}( \{\ b \} \sm \{0\}) \se  f^{-1}( (t, \infty) \sm \{0\}), \]
by Lemma \ref{opaddDTM} we have:
\[  \mu(f^{-1}( \{\ b \} \sm \{0\})) + \mu(f^{-1}( (b, \infty) \sm \{0\})) \le 
\mu( f^{-1}( (t, \infty) \sm \{0\})) = F(t). \]
Thus,  from (\ref{FFF}) we see that 
\[ F(a) \le   \mu(f^{-1}( (a,b) \sm \{0\})) + F(t). \]
As  $ t \to b^-$ we obtain:
\[ F(a) - F(b^-) \le \mu(f^{-1}( (a,b) \sm \{0\})).\]
Therefore, the result is true for finite open intervals in $\r$. 
Since both $\mu$ and $m_f$ are $\tau-$ smooth and
additive on open sets (see Lemma \ref{opaddDTM}), the result  holds for any arbitrary open set in $\r$.

Let $V = \r \sm  \cl{f(X)}$, an open set. By Remark \ref{miscRg} $ \cl{f(X)}$ is compact and $0 \in  \cl{f(X)}$.
Then $m_f (V) = \mu( (f^{-} (V)) = \mu(\O) = 0$. Thus,  $\supp m_f \se  \cl{f(X)}$. For any $[a,b] \se \r$ we have
$| m_f([a,b])| = |F(b) - F(a^-) | < \infty$ by part \ref{Ffini} of Lemma \ref{meas muf}, so $ m_f$ is compact-finite.
\end{proof} 

\begin{remark} 
Our proof of part (B)  is very similar to one in \cite{Alf:ReprTh}, Proposition 3.1, 
even though in \cite{Alf:ReprTh}  a different 
distribution function (which is left-continuous) is used. It particular, it follows that whether one defines a distribution 
function for a topological measure 
based on right semi-infinite intervals (as our function $F$) or a distribution function based on left semi-infinite intervals 
(as in \cite{Alf:ReprTh}), one obtains the same measure $m_f$ 
on $\cl{f(X)}$. In \cite{Butler:ReprDTM} we explore more the question of when different distribution functions for 
a topological measure (and more generally, for a deficient topological measure) produce the same measure on $\r$.
\end{remark}

\begin{definition} \label{rhomu}
Let $\mu$ be a topological measure on  a locally compact space $X$. 
If \\
(A) $\mu(X) < \infty$ define a functional $\rho_{\mu}$ on $\fvix$ \\   
or \\
(B) if $ \mu$ is compact-finite define a functional $\rho_{\mu}$ on  $\fcsx$  by:
\begin{align} \label{rmformula}
\rho_{\mu} (f) = \int_{\r} id \  \, dm_f. 
\end{align} 
Here measure $m_f$ is as in Lemma \ref{mfmeas}.
If $X$ is compact, $\rho_{\mu}$ is a functional on $C(X)$.
\end{definition}

\begin{remark} \label{ifMUmeas}
If $\mu$ is a measure then  $ \rho_\mu (f) = \int_X f \ \, d\mu $
in the usual sense.
\end{remark}

\begin{proposition} \label{rhophif}
Let $\mu$ be a topological measure on  a locally compact space $X$. 
If \\
(A) $\mu(X) < \infty$ and  $f \in \fvix$  
or \\
(B)  $\mu$ is compact-finite and  $f \in \fcsx$ \\
then for every  $\phi \in C(\cl{f(X)})$ (with $\phi(0) = 0$ if $X$ is locally compact but not compact)  we have:
$$ \rho_{\mu} (\phi \circ f) = \int_\r \phi \ d m_f = \int_{[a,b]} \phi \ d m_f, $$
where $[a,b] = \cl{f(X)}$. 
\end{proposition}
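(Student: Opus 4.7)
The plan is to prove the identity by introducing the pushforward measure $\phi^{*} m_{f}$ on $\r$ and showing that it agrees (off $\{0\}$ in case (B), and everywhere in case (A)) with the measure $m_{\phi \circ f}$ produced by Lemma~\ref{mfmeas} applied to $\phi \circ f$. Note first that, by Remark~\ref{smsubalg}, $\phi \circ f$ lies in the appropriate subalgebra ($\fvix$ in case (A), $\fcsx$ in case (B)), so $m_{\phi \circ f}$ is well defined.

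The key computation is on open sets. For an open set $W \se \r$, we have $(\phi \circ f)^{-1}(W) = f^{-1}(\phi^{-1}(W))$, and $\phi^{-1}(W)$ is open in $\r$. In case (A), Lemma~\ref{mfmeas} gives
\[
\phi^{*} m_{f}(W) = m_{f}(\phi^{-1}(W)) = \mu(f^{-1}(\phi^{-1}(W))) = \mu((\phi \circ f)^{-1}(W)) = m_{\phi \circ f}(W),
\]
so the two finite Borel measures coincide on open sets, hence are equal. In case (B), the same chain of equalities works provided $0 \notin W$: then, because $\phi(0) = 0$, we have $0 \notin \phi^{-1}(W)$, so $\phi^{-1}(W) \se \r \sm \{0\}$ is admissible in Lemma~\ref{mfmeas}. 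Thus $\phi^{*} m_{f}$ and $m_{\phi \circ f}$ agree on every open subset of $\r \sm \{0\}$.

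With the two measures identified (up to possibly the point $\{0\}$), the identity follows by combining two standard facts recorded in the preliminaries: the change-of-variables formula for a pushforward, $\int_{\r} g \, d(\phi^{*} m_{f}) = \int_{\r} g \circ \phi \, dm_{f}$ for $g \in C(\r)$, applied with $g = id$; and the observation~(\ref{intG}), which allows one to replace $m_{\phi \circ f}$ with $\phi^{*} m_{f}$ in the integral $\int_{\r} id \, dm_{\phi \circ f}$ since $id(0) = 0$. Before invoking~(\ref{intG}) in case (B) one must check that both measures assign finite mass to $\{0\}$: for $m_{\phi \circ f}$ this is Lemma~\ref{mfmeas} (compact-finiteness), and for $\phi^{*} m_{f}(\{0\}) = m_{f}(\phi^{-1}(\{0\}))$ this follows from $\supp m_{f} \se [a,b]$ together with compact-finiteness of $m_{f}$. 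Stringing these together,
\[
\rho_{\mu}(\phi \circ f) = \int_{\r} id \, dm_{\phi \circ f} = \int_{\r} id \, d(\phi^{*} m_{f}) = \int_{\r} \phi \, dm_{f}.
\]

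Finally, the reduction to $[a,b]$ is immediate from the support statement $\supp m_{f} \se \cl{f(X)} = [a,b]$ in Lemma~\ref{mfmeas}, so $\int_{\r} \phi \, dm_{f} = \int_{[a,b]} \phi \, dm_{f}$. The only genuinely delicate point is case (B), where the exceptional behavior at $\{0\}$ prevents $\phi^{*} m_{f}$ and $m_{\phi \circ f}$ from being literally equal; the main obstacle is verifying the finiteness hypotheses of~(\ref{intG}) and correctly using the assumption $\phi(0) = 0$, both to make $\phi \circ f$ land in the correct subalgebra and to ensure $\phi^{-1}(W) \se \r \sm \{0\}$ when $0 \notin W$.
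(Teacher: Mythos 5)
Your proposal is correct and follows essentially the same route as the paper: both identify the pushforward $\phi^{*}m_{f}$ with $m_{\phi\circ f}$ on open sets (on open subsets of $\r\sm\{0\}$ in case (B)), then combine the change-of-variables formula with observation (\ref{intG}) to transfer the integral. Your version merely spells out a few details the paper leaves implicit, namely the finiteness of the two measures at $\{0\}$ required by (\ref{intG}) and the role of $\phi(0)=0$ in keeping $\phi^{-1}(W)$ away from the origin.
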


\begin{proof}
Assume first that $\mu(X) < \infty$ and  $f \in \fvix$ .
Let $ \phi \in C(\cl{f(X)}), \  \phi(0) = 0$. By Remark \ref{miscRg} $\phi \circ f \in \fvix$.
Consider measures $m_{\phi\circ f} $ and  $\phi^{*} m_f$ 
defined as in  Lemma \ref{mfmeas}  and in Section \ref{SePrelim}. 
For an open set $U$ in $\r$, by Lemma \ref{mfmeas} we have:
\begin{align} \label{phimufop}
 m_{\phi\circ f} (U) = \mu((\phi\circ f)^{-1} (U)) = \mu(f^{-1} \phi^{-1}) (U) = 
m_f (\phi^{-1} (U)) = \phi^{*} m_f (U).
\end{align}
Then $\mu_{\phi\circ f} = \phi^{*} m_f$ as measures on $\r$ and 
\begin{align*}
\rho_{\mu} (\phi \circ f)=  \int_{\r} id \ \, dm_{\phi \circ f} = \int_{\r} id\ \, d \phi^{*} m_f   
= \int_{\r} \phi \  d m_f =  \int_{[a,b]} \phi \  d m_f. 
\end{align*}

Now let $ \mu$ be compact-finite and $ f \in \fcsx$. From Lemma \ref{mfmeas}  and reasoning as in (\ref{phimufop}) 
it follows that $m_{\phi \circ f} = \phi^{*} m_f $ on open sets in $\r \sm \{0\}$, and both measures are compact-finite.
By formula (\ref{intG}) we have:
\[ \rho_{\mu} (\phi \circ f) = \int_{\r} id \ \, dm_{\phi \circ f}  = \int_{\r} id \ \, d {\phi^{*} m_f}
= \int_\r  \phi dm_f =\int_{[a,b]} \phi \  d m_f.\]
\end{proof}

\begin{theorem} \label{mu to rho} 
Let $\mu$ be a topological measure on  a locally compact space $X$. 
\begin{itemize}
\item[(A)]
If $\mu(X) < \infty$ 
then $\rho_{\mu}$ defined in Definition \ref{rhomu} is a quasi-linear functional on $\fvix$ with 
$ \norm \rho_{\mu} \norm \le \mu(X)$.
\item[(B)]
If $ \mu$ is compact-finite 
then $\rho_{\mu}$ defined in Definition \ref{rhomu} is a quasi-linear functional on $\fcsx$ such that 
$ \rho_{\mu} (f) \le \norm f \norm  \, m_f (\supp m_f)$. 
\end{itemize}
\end{theorem}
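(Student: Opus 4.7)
The plan is to reduce every condition in Definition \ref{QI} together with the two boundedness estimates to classical facts about Lebesgue--Stieltjes integration, using Proposition \ref{rhophif} as the main bridge. Once one knows that $\rho_\mu(\phi \circ f) = \int_\R \phi \, dm_f$ for every admissible $\phi$, everything becomes an application of linearity, monotonicity and total-mass formulas for the ordinary measure $m_f$.

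For axiom \ref{QIconsLC}, given $f$ in $\fvix$ (case A) or $\fcsx$ (case B) and $a \in \R$, the map $\phi(t) = at$ lies in $C(\cl{f(X)})$ with $\phi(0) = 0$, and by Proposition \ref{rhophif},
\begin{align*}
\rho_\mu(af) \;=\; \int_\R at \, dm_f(t) \;=\; a \int_\R t \, dm_f(t) \;=\; a \rho_\mu(f).
\end{align*}
For axiom \ref{QIlinLC}, suppose $f, g \in B(h)$. By Remark \ref{smsubalg}, $f = \phi \circ h$ and $g = \psi \circ h$ for $\phi, \psi \in C(\cl{h(X)})$ with (when $X$ is locally compact non-compact) $\phi(0) = \psi(0) = 0$. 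Then $\phi + \psi$ satisfies the same conditions and $f + g = (\phi + \psi) \circ h$, so Proposition \ref{rhophif} and linearity of the ordinary integral against $m_h$ yield $\rho_\mu(f + g) = \rho_\mu(f) + \rho_\mu(g)$. For axiom \ref{QIpositLC}, if $f \ge 0$ then $\cl{f(X)} \subseteq [0, \infty)$, and since $\supp m_f \subseteq \cl{f(X)}$ by Lemma \ref{mfmeas}, the integrand $id$ is nonnegative on the support of $m_f$, so $\rho_\mu(f) = \int_\R id \, dm_f \ge 0$.

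It remains to prove the two norm estimates. In case (B), since $\supp m_f \subseteq \cl{f(X)} \subseteq [-\|f\|, \|f\|]$, we have $id \le \|f\|$ on $\supp m_f$, so
\begin{align*}
\rho_\mu(f) \;=\; \int_{\supp m_f} id \, dm_f \;\le\; \|f\| \, m_f(\supp m_f).
\end{align*}
In case (A), for $0 \le f \le 1$ in $\fvix$, the support of $m_f$ lies in $[0, 1]$, and the main small task is to identify the total mass $m_f(\R)$. Using the distribution function $F(a) = \mu(f^{-1}(a, \infty))$ from Definition \ref{distrF}, we have $F(a) = 0$ for $a \ge \|f\|$ while $f^{-1}((a, \infty)) \nearrow X$ as $a \to -\infty$; by $\tau$-smoothness of $\mu$ on open sets (Lemma \ref{opaddDTM}), $F(a) \to \mu(X)$ as $a \to -\infty$. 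Therefore $m_f(\R) = \mu(X)$, and
\begin{align*}
\rho_\mu(f) \;\le\; \int_{[0,1]} 1 \, dm_f \;\le\; m_f(\R) \;=\; \mu(X),
\end{align*}
giving $\|\rho_\mu\| \le \mu(X)$ after taking the supremum. The only subtle step is this last total-mass identification; once Proposition \ref{rhophif} and $\tau$-smoothness are available, no further difficulty appears.
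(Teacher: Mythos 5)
Your proof is correct and follows essentially the same route as the paper: all three axioms of Definition \ref{QI} are reduced via Proposition \ref{rhophif} to linearity and positivity of the ordinary integral against $m_f$, and the two estimates follow from the support and total-mass properties of $m_f$. The only cosmetic difference is that you identify $m_f(\R)=\mu(X)$ through $\tau$-smoothness of the distribution function, whereas Lemma \ref{mfmeas} gives this at once by taking $W=\R$.
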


\begin{proof}
Let $\mu$ be a topological measure on $X$. 
The proof (which is close to Corollary 3.1 in \cite{Aarnes:TheFirstPaper}) 
is similar for both cases, and we will demonstrate it for case (B).  
If $ f \ge 0$ then by Lemma \ref{mfmeas} $m_f (-\infty, 0) = \mu (f^{-1} (-\infty, 0)) = \mu (\O)=0.$ 
Then
$$ \rho_{\mu} (f) = \int_{\r} id \ \, dm_f = \int_{0}^{\infty} id  \ \, dm_f \ge 0. $$  
Thus, \ref{QIpositLC}  of Definition \ref{QI} holds. To show \ref{QIlinLC}, let $ f \in \fcsx$.
If $ \phi \circ f, \ \psi\circ f \in B(f)$ as in Remark \ref{smsubalg}, using 
the fact that $m_f$ is a measure on $\r$, we have by Proposition \ref{rhophif}:
\begin{eqnarray*}
\rho_{\mu} ( \phi \circ f + \psi \circ f) & = & \rho_{\mu}  ((\phi+ \psi) \circ f) = \int_{\r} (\phi + \psi) d m_f  \\
&=& \int_{\r} \phi \  d m_f + \int_{\r} \psi \ d m_f  = \rho_{\mu} (\phi \circ f) + \rho_{\mu} (\psi \circ f).
\end{eqnarray*}
For any constant $c \in \r$ we also have 
$ \rho_{\mu} (cf) = \rho_{\mu} ((c \, id) \circ f) = \int c \,  id \, dm_f =  c  \rho_{\mu} (f)$,  
so \ref{QIconsLC} holds. 

In case (A) for any $ 0 \le f \le 1$ from (\ref{rmformula}) we see that  
$ \rho_{\mu} (f)  \le m_f( \supp m_f) \le m_f(\r) = \mu(X)$, so $ \norm \rho_{\mu} \norm \le \mu(X)$. 
In case (B)  from (\ref{rmformula}) and Lemma \ref{mfmeas} it is clear that  
$ \rho_{\mu} (f)  \le  \norm f \norm  \, m_f( \supp m_f)$. 
\end{proof} 

\begin{definition} \label{quasiint}
We call a quasi-linear functional $ \rho_{\mu}  $ as in Definition \ref{rhomu} and Theorem \ref{mu to rho} 
a quasi-integral and write
\[ \int_X f \ d \mu = \rho_{\mu} (f) = \int_{\r} id \ \, dm_f.  \]
It is understood that $\rho_{\mu}$ is a quasi-linear functional on 
$C(X)$ when $X$ is compact; 
$ \rho_{\mu}$  is a quasi-linear functional on $ \fvix$ when $X$ is locally compact and 
$\mu(X) < \infty$;  
 $\rho_{\mu}$ is a quasi-linear functional on 
$\fcsx$ when $X$ is locally compact and $ \mu$ is compact-finite.
\end{definition} 

\begin{lemma} \label{MUrhoMU}
For the functional $ \rho_{\mu}$ we have:
\begin{enumerate}[label=\roman*.,ref=\roman*]
\item \label{MUrhoMU1}
If $ U \in \ox $ and $ f \in \fcsx $ is such that  $ \supp f \se U , \ 0 \le f \le 1$ then $\rho_{\mu} (f) \le \mu(U)$.
\item \label{MUrhoMU2}
If $K \in \kx$ and $ f $ is such that $ 0 \le f \le 1, \ f =1$ on $K$, then 
$\rho_{\mu} (f) \ge \mu(K)$.
\end{enumerate}
\end{lemma}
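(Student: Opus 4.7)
My plan is to reduce both inequalities to a layer-cake representation of $\rho_\mu$. Fix $f \ge 0$ in the appropriate domain (either $\fvix$ with $\mu(X) < \infty$, or $\fcsx$ with $\mu$ compact-finite). By Lemma \ref{mfmeas} the associated measure $m_f$ is supported in $[0,\infty)$ and satisfies $m_f((a,\infty)) = F(a)$ for every $a \ge 0$, where $F$ is the distribution function of Definition \ref{distrF}; for $a = 0$ this uses $(0,\infty) \subseteq \r \setminus \{0\}$, so the ``$\setminus\{0\}$'' caveat in case (B) is harmless. Combining $\rho_\mu(f) = \int_\r id\, dm_f$ with Fubini--Tonelli applied to $t = \int_0^t da$ yields the layer-cake identity
\[ \rho_\mu(f) = \int_0^\infty F(a)\, da. \]
Since $0 \le f \le 1$ in both parts, $F$ vanishes on $[1,\infty)$, so the effective range is $[0,1]$.

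For part (i), I would observe that for every $a \ge 0$ the open set $f^{-1}((a,\infty)\setminus\{0\}) = \{x : f(x) > a\}$ sits inside $\{f > 0\} \subseteq \supp f \subseteq U$. Monotonicity of topological measures on $\ox \cup \cx$ (Remark \ref{DTMagree}) then gives $F(a) \le \mu(U)$, and integrating over $[0,1]$ produces $\rho_\mu(f) \le \mu(U)$. For part (ii) the comparison is reversed: the hypothesis $f = 1$ on $K$ forces $K \subseteq \{f > a\}$ for every $a \in [0,1)$, so monotonicity gives $F(a) \ge \mu(K)$, and integrating this inequality over $[0,1)$ yields $\rho_\mu(f) \ge \int_0^1 \mu(K)\, da = \mu(K)$.

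The only genuinely technical step is the layer-cake identity itself, and in particular verifying $m_f((a,\infty)) = F(a)$ at $a = 0$ in case (B) in the face of the ``$\setminus\{0\}$'' in the definition of $F$; once this is settled, both bounds follow purely from monotonicity of $\mu$ together with elementary set inclusions. I would therefore establish the identity once at the beginning and then invoke it in both parts.
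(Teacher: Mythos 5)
Your argument is correct, and it reaches both bounds by a slightly different route than the paper. The paper works directly with $\rho_\mu(f)=\int_\r id\,dm_f$ and uses only a single ``layer'': for (i) it bounds the integrand above by $1$ on $\{t>0\}$, giving $\rho_\mu(f)\le m_f((0,\infty))=\mu(f^{-1}(0,\infty))\le\mu(U)$; for (ii) it bounds the integrand below by the indicator of $\{t=1\}$ and then computes $m_f(\{1\})=m_f([1,\infty))=\lim_{\alpha\to 0}\mu(f^{-1}((1-\alpha,\infty)))\ge\mu(K)$, which requires a small continuity-from-above argument for the finite measure $m_f$. You instead integrate over all layers via Tonelli, obtaining $\rho_\mu(f)=\int_0^1 F(a)\,da$ once and then applying the pointwise inequalities $F(a)\le\mu(U)$ and $F(a)\ge\mu(K)$ on $[0,1)$. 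What your route buys is uniformity (one identity serves both parts) and the elimination of the limit step in (ii), replaced by the inclusion $K\se\{f>a\}$ for $a<1$; what it costs is the Tonelli step and the check that $m_f((a,\infty))=F(a)$ for all $a\ge 0$, which you correctly isolate and which is indeed harmless because $(a,\infty)\sm\{0\}=(a,\infty)$ for $a\ge 0$ and $m_f$ vanishes on $(-\infty,0)$ when $f\ge 0$. Both arguments ultimately rest on the same ingredients: Lemma \ref{mfmeas} and monotonicity of $\mu$ on $\ox\cup\cx$.
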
	

\begin{proof}
\begin{enumerate}
\item \label{MUrhoMU1P}
Using Lemma \ref{mfmeas} we have:
$\rho_{\mu}(f) = \int_{\r} id  \, dm_f \le  1 \cdot m_f(\{t: t >0\}) = \mu(f^{-1} (0, \infty)) \le \mu(U).$
\item \label{MUrhoMU2P}
We have:
\begin{align*} 
\rho_{\mu} (f) &= \int_{\r} id  \, dr \ge  1 \cdot m_f (\{t: t =1\}) = m_f (\{t: t \ge 1\}) \\
&= \lim_{\alpha \goto 0} m_f ((1 - \alpha, \infty))  = \lim_{\alpha \goto 0} \mu (f^{-1} ((1 - \alpha, \infty))\ge  \mu(K). 
\end{align*}
\end{enumerate}
\end{proof}

\section{Representation Theorem for a locally compact space} \label{SeReprTh}
We shall establish a correspondence between topological measures and 
quasi-linear functionals. 

\begin{definition} \label{mrLC}
Let $X$ be locally compact, and let $\rho$ be a quasi-linear functional on $\fvix$ or $\fcsx$.
Define a set function $\mr : \ox \cup \cx \rightarrow [0, \infty]$ as follows:
for an open set $U \se X$ let 
\[ \mr(U) = \sup\{ \rho(f): \  f \in \fcsx, 0\le f \le 1, \supp f \se U  \}, \]
and for a closed set $F \se X$ let
\[ \mr(F) = \inf \{ \mr(U): \  F \se U,  U \in \ox \}.\]
\end{definition}

\begin{remark} \label{muXlerho}
If $ \rho$ is a quasi-linear functional on $\fvix$  then  $\mr(X) \le \norm \rho \norm$; 
if $\rho $ is a quasi-linear functional on $\fcsx$  then  $\mr(X) = \norm \rho \norm$.
\end{remark}

\begin{lemma} \label{propmrLC}
For the set function $\mr$ from Definition \ref{mrLC} the following holds: 
\mbox{ } 
\begin{enumerate}[label=p\arabic*.,ref=p\arabic*]
\item \label{nongt}
$\mr $ is non-negative.
\item \label{opmon}
$\mr$ is monotone, i.e.
if $ A \se B, \ A,B \in \ox \cup \cx$ then $\mr(A) \le \mr(B)$.
\item \label{rhoK}
Given an open set $U$, for any compact $K \se U$ 
\[ \mr(U) = \sup \{ \rho(g):  1_K \le g \le 1, \ g \in \fcsx, \  \supp g \se U.  \} \]
\item \label{surho}
For any $K \in \bcx$ 
\[ \mr(K) = \inf \{ \rho(g): \   g \in \fcsx, g \ge 1_K \} \]
\item \label{surhoA}
For any $K \in \bcx$ 
\[ \mr(K) = \inf \{ \rho(g): \   g \in \fcsx, 0 \le g \le 1, g=1 \mbox{  on   } K \} \]
\item \label{compFint}
$ \mr$ is compact-finite.
\item \label{VVbar}
Given $ K \in \bcx$, for any open $U$ such that $ K \se U$
\[ \mr(K) = \inf \{ \mr(V): \ V \in \ox, \ K \se V \se \cl V \se U \} \]
\item \label{innerrg}
For any  $U \in \ox$
\[ \mr(U) = \sup \{ \mr (K): K \in \bcx, \ K \se U \} \]
\item \label{mropad}
For any disjoint compact sets $K$ and $C$ 
\[ \mr(K \sc C) = \mr(K) + \mr(C) \]
\item \label{mropclad}
If $ K \se U, \ K \in \bcx, \ U \in \ox$ then $\mr(U) = \mr(K) + \mr(U \sm K).$
\item \label{rhoiK}
If $ K \in \kx$, and $ f \in \fcsx, 0 \le f \le 1, \ \supp f  \se K$, then $\rho(f) \le \mr(K)$. 
\item \label{unconK}
If $ K \in \kx$ and $ f_i \in \fcsx, 0 \le f_i \le 1, \ \supp f_i  \se K$ for $i=1,2 \, $ then 
$ | \rho(f_1) - \rho(f_2) | \le \mr(K)$.
\end{enumerate}
\end{lemma}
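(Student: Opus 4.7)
The plan is to work through the twelve items in an order that respects their logical dependencies, saving the finite additivity on disjoint compacts for last. Items \ref{nongt} and \ref{opmon} are immediate from the definition together with positivity and monotonicity of $\rho$ on $\fcsx$. Item \ref{rhoiK} also follows directly: if $\supp f \subseteq K$, then for every open $U \supseteq K$ the function $f$ lies in the sup-class defining $\mr(U)$, so $\rho(f) \le \mr(U)$, and taking $\inf$ over $U$ gives $\rho(f) \le \mr(K)$. Item \ref{unconK} follows at once since both $\rho(f_i)$ lie in $[0,\mr(K)]$ by \ref{rhoiK} and positivity.

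Next I would prove the characterizations of $\mr(K)$ on compact $K$. For \ref{surhoA}, the ``$\le$'' direction uses Urysohn's lemma to produce, for each open $U \supseteq K$, a function $g \in \fcsx$ with $0 \le g \le 1$, $g \equiv 1$ on $K$, and $\supp g \subseteq U$; then $\rho(g) \le \mr(U)$ and taking infima yields the bound. The ``$\ge$'' direction is the key estimate: for any such $g$ and $\eps > 0$, the set $\{g > 1-\eps\}$ is an open neighborhood of $K$, and every $f \in \fcsx$ with $0 \le f \le 1$ whose support lies in this neighborhood satisfies $f \le g/(1-\eps)$ pointwise, so monotonicity of $\rho$ on $\fcsx$ (part \ref{monot} of Lemma~\ref{linearity}) gives $\rho(f) \le \rho(g)/(1-\eps)$; taking sup and then $\eps \to 0$ yields $\mr(K) \le \rho(g)$. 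Item \ref{surho} follows because $g \ge 1_K$ forces $g \ge 0$ pointwise, so $g \wedge 1$ lies in the class of \ref{surhoA} and satisfies $\rho(g \wedge 1) \le \rho(g)$ by monotonicity. Compact-finiteness \ref{compFint} is then immediate from \ref{surho} using any single Urysohn function $g \ge 1_K$. Item \ref{VVbar} follows from Lemma~\ref{easyLeLC}: given any open $W \supseteq K$, replace it by $W \cap V_0$ where $V_0$ satisfies $K \subseteq V_0 \subseteq \cl{V_0} \subseteq U$. For \ref{rhoK}, given $f$ in the sup-class for $\mr(U)$ and compact $K \subseteq U$, Urysohn produces $h$ with $1_K \le h \le 1$ and $\supp h \subseteq U$; then $g := f \vee h$ lies in the restricted class with $g \ge f$, so monotonicity gives $\rho(g) \ge \rho(f)$. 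Inner regularity \ref{innerrg} then follows at once: ``$\ge$'' from $\mr(K) \le \mr(U)$ via \ref{opmon}, and ``$\le$'' from \ref{rhoiK} applied with $K = \supp f$.

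The main obstacle is the finite additivity \ref{mropad}. For the superadditivity $\mr(K) + \mr(C) \le \mr(K \sqcup C)$, given any open $V \supseteq K \sqcup C$ I would use the fact that disjoint compacts in a locally compact Hausdorff space can be separated by disjoint open sets to produce $W_K \supseteq K$ and $W_C \supseteq C$ with $W_K \cup W_C \subseteq V$; any $f_K, f_C$ in the sup-classes of $\mr(W_K), \mr(W_C)$ have disjoint compact supports, so $f_K \cdot f_C = 0$ and $f_K + f_C$ lies in the sup-class of $\mr(V)$, giving $\rho(f_K) + \rho(f_C) \le \mr(V)$ by part \ref{rho product} of Lemma~\ref{linearity}; taking sup in $f_K, f_C$ and then $\inf$ over $V$ yields the bound via \ref{opmon}. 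For the subadditivity $\mr(K \sqcup C) \le \mr(K) + \mr(C)$, I would use Lemma~\ref{easyLeLC} and \ref{VVbar} to find opens $V_K \supseteq K, V_C \supseteq C$ with $\cl{V_K} \cap \cl{V_C} = \emptyset$, $\mr(\cl{V_K}) \le \mr(K) + \eps$, and $\mr(\cl{V_C}) \le \mr(C) + \eps$. Any $f \in \fcsx$ supported in $V_K \cup V_C$ then decomposes as $f_K + f_C$, where $f_K$ equals $f$ on $\cl{V_K}$ and $0$ elsewhere, continuous because $f$ vanishes on $\partial V_K$ (which lies outside $V_K \cup V_C$ by disjointness of closures); since $\supp f_K \subseteq \cl{V_K}$, part \ref{rhoiK} gives $\rho(f_K) \le \mr(\cl{V_K})$, and $f_K \cdot f_C = 0$ by disjointness of closures, so part \ref{rho product} of Lemma~\ref{linearity} yields $\rho(f) = \rho(f_K) + \rho(f_C) \le \mr(K) + \mr(C) + 2\eps$. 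The delicate point is ensuring the decomposition produces continuous functions with compact support, which is exactly why one needs disjoint closures rather than merely disjoint open sets.

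Finally \ref{mropclad} follows from \ref{innerrg} and \ref{mropad}. For ``$\le$'', write any compact $K'' \subseteq U$ as the disjoint compact union $(K \cap K'') \sqcup (K'' \setminus K)$ and apply \ref{mropad} together with monotonicity to get $\mr(K'') \le \mr(K) + \mr(U \setminus K)$; then take sup via \ref{innerrg}. For ``$\ge$'', any compact $K' \subseteq U \setminus K$ yields disjoint compacts $K, K' \subseteq U$ with $\mr(K) + \mr(K') = \mr(K \sqcup K') \le \mr(U)$ by \ref{mropad} and \ref{opmon}; taking sup over such $K'$ via \ref{innerrg} applied to the open set $U \setminus K$ completes the proof.
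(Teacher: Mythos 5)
Most of your argument is sound, and in several places it takes a cleaner route than the paper: deriving \ref{innerrg} directly from \ref{rhoiK}, proving \ref{surhoA} via the pointwise bound $f \le g/(1-\eps)$ and part \ref{monot} of Lemma \ref{linearity} (the paper instead truncates $g$ at $1-\delta$), and proving \ref{mropad} by decomposing functions over disjoint closed neighborhoods (the paper multiplies by Urysohn functions and uses parts \ref{surho} and \ref{rho product} of Lemma \ref{linearity}). The ``$\ge$'' half of \ref{mropclad} via \ref{mropad} and \ref{innerrg} is also fine, and replaces a function-level argument in the paper.

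The gap is in the ``$\le$'' half of \ref{mropclad}. You write a compact $K'' \se U$ as the ``disjoint compact union'' $(K \cap K'') \sc (K'' \sm K)$, but $K'' \sm K$ is the intersection of a compact set with the open set $X \sm K$ and is in general neither compact nor closed (take $K = [0,1]$, $K'' = [0,2]$ in $\r$), so \ref{mropad} does not apply, and $\mr(K'' \sm K)$ need not even be defined. Nor can this be repaired by a covering argument at the level of the set function: one does have $K'' \se W \cup (U \sm K)$ for a small open $W \supseteq K$, but topological measures are not subadditive on overlapping open sets --- that failure is precisely what makes them non-measures --- so $\mr(K'') \le \mr(W) + \mr(U \sm K)$ cannot be deduced from the other parts of the lemma. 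This inequality genuinely requires returning to the functional. The paper's proof takes a Urysohn function $g$ equal to $1$ on a compact neighborhood $\cl W$ of $K$ with $\supp g \se W_1$ and $\mr(W_1) < \mr(K) + \eps$, and a function $f$ with $1_{\cl V} \le f \le 1$, $\supp f \se U$, $\rho(f) > \mr(U) - \eps$, where $\cl V \supseteq W_1$; then $f - g$ is supported in $U \sm K$, and part \ref{bumpfs} of Lemma \ref{linearity} gives $\rho(f-g) = \rho(f) - \rho(g)$, whence $\mr(U \sm K) \ge \mr(U) - \mr(K) - 2\eps$. You need to supply an argument of this kind for that direction.
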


\begin{proof}   
\mbox{ }
\begin{enumerate}[label=p\arabic*.,ref=p\arabic*]
\item \label{nongtP}
$\mr$ is non-negative since $\rho$ is positive.
\item  \label{opmonP}
From Definition \ref{mrLC} we have monotonicity on $\ox$; then monotonicity on $\ox \cup \cx$ easily follows.
\item \label{rhoKP}
From Definition \ref{mrLC} we see that 
\[  \sup \{ \rho(g):  1_K \le g \le 1, \ g \in \fcsx, \  \supp g \se U \} \le \mr(U). \]
To show the opposite inequality, assume first that $\mr(U)< \infty$. For $\eps>0$ choose $f \in \fcsx$ such that 
$0 \le f \le 1, \  \supp f \in U,$ and
$ \rho(f)  > \mr(U) - \eps.$
Choose Urysohn function $g \in \fcsx$ such that $g = 1$ 
on the compact set $K \cup \supp f$ and 
$\supp g \se U$. By part \ref{monconst} of Lemma \ref{linearity}  
$$ \rho(g) \ge \rho(f) > \mr(U) - \eps. $$
Therefore, 
\[ \mr(U) =  \sup \{ \rho(g):  1_K \le g \le 1, \ g \in \fcsx, \  \supp g \se U \}.  \]
When $\mr(U)=  \infty$ we replace $f$ by functions $f_n \in \fcsx$ such that
$ \rho (f_n) \ge n, \ \supp f_n \se U$, and use a similar argument  to show that  
$  \sup \{ \rho(g):  1_K \le g \le 1, \ g \in \fcsx, \  \supp g \se U \} = \infty$.
\item  \label{surhoP}
Take any $U \in \ox$ such that $ K \se U$. By part \ref{rhoKP} 
we see that
 $\inf \{ \rho(g): \ g \in \fcsx,  g \ge 1_K \}  \le \mr(U)$. Taking infimum over all 
open sets containing $K$ we have: 
 \[ \inf \{ \rho(g): \ g \in \fcsx, g \ge 1_K  \}  \le \mr(K).\] 
To prove the opposite inequality, take any $ g \in \fcsx$ such that $g \ge 1_K$.
Let $0< \delta < 1$.
Let $U = \{ x:  \ g(x) > 1-\delta \}. $
Then $U$ is open and $ K \se U$.
Consider continuous function $ h = \inf \{g,  1-\delta \} . $
By part (\ref{truncaf1}) of Lemma \ref{lineari1} and  part \ref{monosuba} of Lemma \ref{linearity}
$ \rho(h) \le \rho(g)$.
Since $\displaystyle{\frac{h}{1- \delta}}  = 1$ on $U$,
for any function $f \in \fcsx, \ 0 \le f \le 1, \supp f \se U$ we have
$ f \le \displaystyle{\frac{h}{1- \delta}}$ and so by part \ref{bumpfs2} of Lemma \ref{linearity}
$$  \rho(f) \le \rho \left( \frac{h}{1-\delta} \right) = \frac{\rho(h)}{1-\delta}. $$
Then 
$ (1-\delta) \rho(f) \le \rho(h) \le \rho(g)$,  
and so
\begin{align*}
(1-\delta) \mr(K) &\le  (1-\delta)\mr(U) \\ &=(1-\delta) \sup \left\{  \rho(f) : \, 0 \le f \le 1_U, \,  \supp f \se U  \right\}  
 \le \rho(g)
\end{align*} 
Thus, for any  $ g \in \fcsx$ such that $g \ge 1_K$ and any $0 < \delta < 1$ 
 $$ (1 - \delta) \mr(K) \le \rho(g).$$
Therefore,
\[ \mr(K) \le \inf \{ \rho(g): \ g \in \cbx, g \ge 1_K  \}.\]
\item
Essentially identical to the proof for part \ref{surho}.
\item
Follows from part \ref{surho}.
\item  \label{VVbarP}
The proof uses Lemma \ref{easyLeLC} and is left to the reader.
\item  \label{innerrgP}
From Definition \ref{mrLC} we see that $\mr(K) \le \mr(U)$ for any $K \se U, \ K \in \bcx$,
hence,
\[ \sup \{ \mr (K): K \in \bcx, \ K \se U \} \le \mr(U).\]
For the opposite equality,  assume first that $\mr(U) < \infty$.  
For $\eps >0$ find a function $f \in \fcsx, 0\le f \le 1, \supp f \se U$ 
for which $\mr(U) -\eps < \rho(f)$. Let $K = \supp f$. Choose $V  \in \ox$ such that $ K \se V$, 
$\mr(V) < \mr(K) + \eps$.  We may take  $V \se U$.   
Pick Urysohn function $g \in \fcsx$ such that $g = 1$ on $K$ and 
$\supp g \se V$.  Note that $\rho(g) \le \mr(V)$. 
Using  part \ref{monot} of Lemma \ref{linearity} we have 
$ \rho(f) \le \rho(g)$,
and so
\[ \mr(U) -\eps < \rho(f) \le \rho(g) \le \mr(V) < \mr(K) + \eps. \]
This gives us    
\[ \sup \{ \mr (K): K \in \bcx, \ K \se U \} \ge \mr(U).\]
When $\mr(U)=  \infty$ we replace $f$ by functions $f_n \in \fcsx$ such that
$ \rho (f_n) \ge n, \ K_n =\supp f_n \se U$, and use a similar argument  to show that  
$\sup \{ \mr (K): K \in \bcx, \ K \se U \}  = \infty$.
\item  \label{mropadP1}
Let $K= K_1 \bsc K_2, \ K_1, K_2 \in \kx$. 
It is enough to consider the case when both $\mr(K_1)$ and  $\mr(K_2)$  are finite.
There are disjoint open sets $V_1, V_2$  such that $K_i \se V_i$.  
For $ \eps>0$ by part \ref{surhoA}
pick functions $g_1, g_2 \in \fcsx$ such that  
$\supp g_i \se V_i, \  1_{K_i} \le g_i \le 1$ and $\rho(g_i) - \mr(K_i) < \eps$ for $i=1,2$.
Since $g_1 + g_2 = 1$ on $K$ and $g_1 \, g_2 =0$, by part \ref{surho} and Lemma \ref{linearity}, part \ref{rho product} 
$$\mr(K) \le  \rho(g_1 + g_2) = \rho(g_1) + \rho(g_2) < \mr(K_1) + \mr(K_2) + 2 \eps, $$
showing that $ \mr(K) \le \mr(K_1) + \mr(K_2)$.
Now for $ \eps>0$ by part \ref{surho} let $f \in \fcsx$ be such that $ f \ge 1_K$ and $ \rho(f)  - \mr(K) < \eps$. 
For the functions $g_1, g_2$ as above
$ g_1 + g_2 \le 1, \ (g_1f)(g_2f)=0, \ g_if \ge 0,$ and $ g_i f \ge 1$ on $K_i$. 
Then by parts \ref{rho product} and \ref{monot} of Lemma \ref{linearity}  
\begin{align*}
\mr(K_1) + \mr(K_2) & \le \rho(g_1f) + \rho(g_2f) = \rho((g_1 + g_2)f) \le \rho(f) \\
&\le \mr(K) + \eps, 
\end{align*}
giving $ \mr(K_1) + \mr(K_2) \le \mr(K)$. 
\item \label{mropcladP} 
Let $ K \se U, \ K  \in \bcx, \ U \in \ox$. 
First we shall show that 
\begin{align} \label{ne1}
\mr(U \sm K) +\mr(K) \ge  \mr(U).
\end{align}
By Lemma \ref{easyLeLC} let $V $ be an open set with compact closure such that
\[ K \se V \se \cl V \se U.\] 
If $\mr(K) = \infty$,  inequality (\ref{ne1}) trivially holds by monotonicity of $ \mr$, so we assume that $\mr(K) < \infty.$ 
For $\eps>0$  choose $W_1 \in \ox$ such that $K \se W_1 $ and $\mr(W_1) < \mr(K) + \eps$. 
We may assume that $W_1 \se V$, so
\[ K \se W_1 \se V  \se \cl V \se U.\]
Also,  there exists an open set $W $ with compact closure  such that
 \[ K \se W \se \cl W \se W_1 \se V  \se \cl V \se U.\]
Choose Urysohn function $ g \in \fcsx$ such that $1_{\cl W} \le g \le 1, \ \supp g \se W_1$. 
Then 
\[ \rho (g) \le \mr(W_1) < \mr(K) + \eps. \] 
First assume that $\mr(U) < \infty$.
By part \ref{rhoKP} choose $f \in \fcsx$ such that $1_{\cl V} \le f \le 1,  \ \supp f \se U$, and
\[ \rho(f)  > \mr(U) - \eps.\]
Note that $0 \le f-g \le 1$, and, since $f-g = 0$ on $ \cl W$, we have $\supp (f -g) \se U \sm K$. 
Also,  by part \ref{bumpfs} of Lemma \ref{linearity} we have 
$ \rho(f-g) = \rho(f) -\rho(g)$. So 
\begin{align*}
\mr(U \sm K)  & \ge \rho(f-g) = \rho(f) - \rho(g) \\
& \ge \mr(U) - \eps - \mr(K) - \eps, 
\end{align*}
which gives us inequality (\ref{ne1}).
If $\mr(U) = \infty$, use instead of $f$ functions $f_n $ with $1_{\cl V} \le f_n \le 1,  \ \supp f_n \se U, \ \rho(f_n) \ge n$
in the above argument to show that $\mr(U \sm K) = \infty$. Then  inequality (\ref{ne1}) holds.

Now we would like to show 
\begin{align} \label{ne2}  
\mr(U) \ge  \mr(U\sm K) + \mr(K).
\end{align}
When $\mr(U \sm K) = \infty$,  inequality (\ref{ne2}) holds trivially, 
so we assume that $\mr(U \sm K) < \infty$.
Given $\eps>0$,  
choose $ g \in \fcsx,  \ 0 \le g \le 1$ such that $ C = \supp g \se U \sm K$ and 
\[ \rho(g) > \mr(U \sm K)  - \eps.\] 
Note that $K \se U \sm C$.
If $\mr(U \sm C) = \infty,$ then $\mr(U) = \infty$, so (\ref{ne2}) holds. 
So assume that $\mr( U \sm C) < \infty.$ 
By part \ref{rhoKP} choose $f \in \fcsx$ such that 
$1_K \le f \le 1,  \ \supp f \se U \sm C$, and $ \rho(f)  > \mr(U \sm C) - \eps$.
Then 
\[ \rho(f)  > \mr(U \sm C) - \eps \ge \mr(K) - \eps.\]
Since $fg=0$, 
applying  part \ref{rho product} of Lemma \ref{linearity} we obtain 
$\rho(f+g) = \rho(f) + \rho(g)$. Since $ f+g \in \fcsx$ with  $\supp(f+ g) \se U$, we obtain:
\begin{align*}
\mr(U) \ge \rho(f+g)  = \rho(f) + \rho(g) \ge \mr(K)  + \mr(U \sm K) - 2 \eps.
\end{align*}
Therefore,   $ \mr(U) \ge  \mr(U\sm K) + \mr(K)$. 
\item
For any open set $U$ containing $K$ we have $\rho(f) \le \mr(U)$. Then   $\rho(f) \le \mr(K)$.
\item 
Using part \ref{rhoiK} we see for $i=1,2$ that $0 \le \rho(f_i) \le \mu(K)$, and the statement follows. 
\end{enumerate}
\end{proof}

\begin{remark}
In part \ref{surho} we basically follow a proof given by D. Grubb in \cite{Grubb:Lectures}. 
Proof of inequality (\ref{ne1})  is essentially from \cite{Alf:ReprTh}. 
\end{remark} 

\begin{theorem} \label{rho2muLC}
Let $X$ be locally compact. 
If $\rho$ is a quasi-linear functional on  $\fvix$ or on $ \fcsx$, 
then $\mr$ defined in Definition \ref{mrLC}   is a compact-finite topological measure. 
If $ \norm \rho \norm < \infty$ then 
$ \mr$ is finite with $ \mr(X) \le \norm \rho \norm$.
\end{theorem}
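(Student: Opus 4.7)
The plan is to assemble the statement almost entirely from Lemma \ref{propmrLC} together with the equivalent formulation of \ref{TM1} given in Remark \ref{DTM2TM}. Virtually all of the analytic content was already extracted in the preceding lemma, so this proof amounts to checking the definitions.

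First I would handle the topological measure axioms one at a time. Property \ref{TM3} of Definition \ref{TMLC} is literally the definition of $\mr$ on closed sets in Definition \ref{mrLC}, so there is nothing to verify. Property \ref{TM2} is precisely part \ref{innerrg} of Lemma \ref{propmrLC}, again requiring no further work. For \ref{TM1}, rather than verify it directly on $\ox \cup \kx$, I would invoke Remark \ref{DTM2TM} and check its two equivalent conditions: additivity on disjoint compact sets, which is part \ref{mropad} of Lemma \ref{propmrLC}, and the inequality $\mr(U) \le \mr(K) + \mr(U \sm K)$ for $K \in \kx, U \in \ox$, which is in fact an equality by part \ref{mropclad} of the same lemma.

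Next, compact-finiteness of $\mr$ is immediate from part \ref{compFint} of Lemma \ref{propmrLC}. Finally, for the norm bound, if $\norm \rho \norm < \infty$, then unraveling Definition \ref{mrLC} gives
\[
\mr(X) = \sup\{\rho(f) : f \in \fcsx,\ 0 \le f \le 1,\ \supp f \se X\} \le \norm \rho \norm,
\]
since the set of $f \in \fcsx$ with $0 \le f \le 1$ is contained in the set of $f \in \fvix$ (or in $\fcsx$) with $0 \le f \le 1$ over which $\norm \rho \norm$ is defined in Definition \ref{normrho}. In particular $\mr$ is finite in this case.

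There is no genuine obstacle here: the substantive work, especially the delicate decomposition $\mr(U) = \mr(K) + \mr(U \sm K)$ via Urysohn functions and part \ref{bumpfs} of Lemma \ref{linearity}, was already carried out in proving Lemma \ref{propmrLC}. The only point that deserves explicit mention is the appeal to Remark \ref{DTM2TM}, which lets us avoid reproving disjoint additivity on mixed open/compact unions from scratch.
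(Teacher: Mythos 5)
Your proposal is correct and follows essentially the same route as the paper: the paper likewise obtains \ref{TM2} and \ref{TM3} from Definition \ref{mrLC} and part \ref{innerrg} of Lemma \ref{propmrLC}, obtains \ref{TM1} from parts \ref{mropad} and \ref{mropclad} together with Remark \ref{DTM2TM}, and gets compact-finiteness and the bound $\mr(X) \le \norm \rho \norm$ from part \ref{compFint} and Remark \ref{muXlerho} (whose content you re-derive inline). The only cosmetic omission is that the paper also explicitly notes non-negativity of $\mr$ via part \ref{nongt}, which is immediate from positivity of $\rho$.
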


\begin{proof}
By part \ref{nongt} of Lemma \ref{propmrLC} $\mr$ is non-negative.
Definition \ref{mrLC} and part \ref{innerrg} of Lemma \ref{propmrLC}
give  conditions (TM2) and (TM3) of Definition \ref{TMLC}. 
Parts \ref{mropad} and \ref{mropclad} of Lemma \ref{propmrLC} and 
Remark \ref{DTM2TM} give condition (TM1). Thus, $\mr$ is a topological measure.
The remaining statements are part \ref{compFint} of Lemma \ref{propmrLC} and Remark \ref{muXlerho}.
\end{proof}

\begin{remark}
When $X$ is compact we use $A(f)$. For an open set $U$ we may define 
$\mr(U) = \sup\{ \rho(f): \  f \in C(X), 0\le f \le 1, \supp f \se U  \}$ or 
$ \mr(U) = \sup \{ \rho(f) : \ f \in C(X), \, 0 \le f \le 1_U\} $, and for a closed set $C$ define
$\mr(C) = \mr(X) - \mr(X \sm C)$. One may show that $\mr$ is a topological measure.
\end{remark}

\begin{theorem} [Representation theorem] \label{art}
Let $X$ be locally compact.
\begin{itemize}
\item[(A)]
If $\rho$ is a quasi-linear functional on $\fvix$ such that
$\norm \rho \norm  < \infty$  \\
or
\item[(B)]
$\rho$ is a quasi-linear functional on $ \fcsx$
\end{itemize}
then there exists a unique
topological measure $\mu$  on $X$ such that $ \rho = \rho_{\mu}$.
In fact, $ \mu = \mu_{\rho}$. In case (A) $ \mu$ is finite with $\mu(X) = \norm \rho \norm$, 
and in case (B) $ \mu$ is compact-finite. 
\end{theorem}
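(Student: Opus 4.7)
The plan is to take $\mu := \mu_\rho$ and verify $\rho = \rho_\mu$; the existence, uniqueness, and norm statements will then follow routinely. By Theorem~\ref{rho2muLC}, $\mu$ is a topological measure---compact-finite in both cases and finite with $\mu(X) \le \norm \rho \norm$ in case~A---and by Theorem~\ref{mu to rho} the associated $\rho_\mu$ is a quasi-linear functional on the correct domain. Since $f^\pm$ lie in the singly generated subalgebra of $f$ by part~\ref{rho product1} of Lemma~\ref{lineari1}, part~\ref{rho product} of Lemma~\ref{linearity} reduces the task to proving $\rho(f) = \rho_\mu(f)$ for $f \ge 0$.

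For such $f$, fix a partition $0 = t_0 < t_1 < \cdots < t_n$ with $t_n > \norm f \norm$ and mesh $\delta$, and define $\phi_i(t) = ((t \wedge t_i) - t_{i-1})^+$. Each $\phi_i$ is continuous with $\phi_i(0) = 0$ and $\sum_i \phi_i = \mathrm{id}$ on $[0, t_n]$, so part~\ref{sumphi} of Lemma~\ref{linearity} gives $\rho(f) = \sum_i \rho(\phi_i \circ f)$ and $\rho_\mu(f) = \sum_i \rho_\mu(\phi_i \circ f)$. Setting $g_i = (t_i - t_{i-1})^{-1} \phi_i \circ f$, one has $0 \le g_i \le 1$, $g_i = 1$ on $K_i = \{f \ge t_i\}$, and $g_i$ vanishing off $U_{i-1} = \{f > t_{i-1}\}$. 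For $i \ge 2$ the support of $g_i$ lies inside the compact set $\{f \ge t_{i-1}\}$ (compact in case~B because it sits in $\supp f$, and in case~A because $t_{i-1} > 0$ with $f \in \fvix$), so $g_i \in \fcsx$; Definition~\ref{mrLC} and part~\ref{surhoA} of Lemma~\ref{propmrLC} then sandwich $\mu(K_i) \le \rho(g_i) \le \mu(U_{i-1})$, and Lemma~\ref{MUrhoMU} gives the identical sandwich for $\rho_\mu(g_i)$. Writing $F$ for the distribution function of Definition~\ref{distrF}, we have $\mu(U_{i-1}) = F(t_{i-1})$ and $\mu(K_i) \ge F(t_i)$, so the total contribution from $i \ge 2$ to $|\rho(f) - \rho_\mu(f)|$ telescopes to at most $\delta F(t_1)$.

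For $i = 1$ in case~B, $\supp g_1 \se \supp f$ is compact and the same sandwich applies. In case~A, $g_1$ may fail to have compact support, but $0 \le g_1 \le 1$ with $g_1 \in \fvix$ forces $|\rho(g_1)|, |\rho_\mu(g_1)| \le \norm \rho \norm$, so the first slice contributes at most $2 t_1 \norm \rho \norm$. Letting $\delta \to 0$ (with $t_1 \to 0$ in case~A) then forces $\rho(f) = \rho_\mu(f)$. For uniqueness, suppose $\rho = \rho_\nu$; for any compact $K$ and open $U \supseteq K$, a Urysohn function $g \in \fcsx$ with $1_K \le g \le 1$ and $\supp g \se U$, obtained via Lemma~\ref{easyLeLC}, satisfies $\nu(K) \le \rho(g) \le \nu(U)$ by Lemma~\ref{MUrhoMU}; combining with Definition~\ref{mrLC} and part~\ref{surhoA} of Lemma~\ref{propmrLC} yields $\nu = \mu_\rho$ on $\kx$, and Remark~\ref{DTMagree} extends this to equality throughout. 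In case~A one finally has $\norm \rho \norm = \norm \rho_\mu \norm \le \mu(X) \le \norm \rho \norm$, so $\mu(X) = \norm \rho \norm$.

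The principal obstacle is threading the discrete sums $\sum (t_i - t_{i-1}) \rho(g_i)$ through the Lebesgue integral $\rho_\mu(f) = \int_0^\infty F(a)\,da$ using only the weak sandwich $\mu(K_i) \le \rho(g_i) \le \mu(U_{i-1})$ and monotonicity of $F$, together with endpoint management of the non-compactly-supported first slice $g_1$ in case~A via the a~priori bound $\norm \rho \norm < \infty$.
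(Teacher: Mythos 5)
Your argument is correct in outline but takes a genuinely different route from the paper's. The paper fixes $f$, observes that $\phi \mapsto \rho(\tilde\phi\circ f)$ (with $\tilde\phi = \phi - \phi(0)$) is a positive \emph{linear} functional on $C(\cl{f(X)})$, invokes the Riesz representation theorem to produce a measure $m$ on $\cl{f(X)}$, and then identifies $m$ with $m_f$ on open intervals of $\cl{f(X)}\sm\{0\}$ by squeezing Urysohn functions $\phi_n\circ f$ between $\mu$-values of compact and open subsets of $f^{-1}(W)$; the conclusion $\rho(f)=\int id\,dm=\int id\,dm_f=\rho_\mu(f)$ then drops out. You instead run a direct staircase/Riemann--Stieltjes approximation in the spirit of Aarnes's original argument: decompose $id=\sum\phi_i$ along a partition, sandwich each slice $\rho(g_i)$ and $\rho_\mu(g_i)$ between $\mu$-values of level sets of $f$, and let the mesh tend to zero. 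Your route avoids Riesz entirely and makes the error term explicit; the paper's route outsources the analysis to Riesz and only has to match two measures on $\r\sm\{0\}$, which keeps the quasi-linear bookkeeping minimal. Your uniqueness argument (proving $\nu=\mr$ on $\kx$ and citing Remark \ref{DTMagree}) also differs slightly from the paper's direct contradiction via $\mu(U)<\nu(K)$; both work. The reduction to $f\ge 0$, the treatment of the first slice in case (A) via the a priori bound by $\norm \rho \norm$, and the norm identity at the end are all sound.

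One imprecision needs repair: the upper half of your sandwich, $\rho(g_i)\le\mu(U_{i-1})$ ``by Definition \ref{mrLC},'' requires $\supp g_i\se U_{i-1}$, but $\supp g_i=\cl{\{f>t_{i-1}\}}$, which is contained in $\cl{U_{i-1}}$ and not in $U_{i-1}$ (it cannot be: $X$ is connected, so a nonempty proper open set is never closed). The standard fix is to use instead $\supp g_i\se\{f\ge t_{i-1}\}$, which is compact for $i\ge 2$, and bound $\rho(g_i)\le\mr(\{f\ge t_{i-1}\})$ by part \ref{rhoiK} of Lemma \ref{propmrLC}, with $\rho_\mu(g_i)\le\mu(\{f\ge t_{i-1}\})$ obtained from Lemma \ref{MUrhoMU} together with outer regularity \ref{TM3}. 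Both quantities then lie in $[\mu(\{f\ge t_i\}),\mu(\{f\ge t_{i-1}\})]$, and the sum over $i\ge 2$ telescopes to at most $\delta\,\mu(\{f\ge t_1\})$, which is finite since $t_1>0$ and changes nothing in the limit. With that adjustment the proof goes through.
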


\begin{proof}
The proof is similar in both cases, and we will provide it for case (A).  
Given a quasi-linear functional $\rho$ on $ \fvix$,  by Theorem \ref{rho2muLC}
construct a finite topological measure $\mu = \mu_{\rho}$ with  $\mu(X) \le \norm \rho \norm$.
By Theorem \ref{mu to rho} obtain from $\mu$ 
a quasi-linear functional $\rho_{\mu}$ with $ \norm \rho_{\mu} \norm  \le \mu(X)$.
We shall show that $\rho = \rho_{\mu}$. (This will also imply that   $ \norm \rho_{\mu} \norm = \mu(X)$.)
Fix $ f \in \fvix$. 
Recall  from Definition \ref{rhomu} that 
\begin{eqnarray*} 
\rho_{\mu} (f)  = \int_{\r} id \ \, d m_f
\end{eqnarray*}
where 
$ m_f $ is a measure on $\r$ (supported on $ \cl{f(X)}$) such that 
$ m_f (W) = \mu( f^{-1}(W))$ for every open set $W \se \r$ 
by Lemma  \ref{mfmeas}.
For a continuous function $\phi$  on $\cl{f(X)}$  let $\tilde \phi (x) = \phi(x) - \phi(0)$.
Consider the map  $L:  C(\cl{f(X)}) \rightarrow \r$ given by
 $L(\phi) = \rho( \tilde \phi \circ f) $ for each $ \phi \in   C(\cl{f(X)})$.
From Remark \ref{smsubalg} and linearity of $\rho$  on the subalgebra generated by $f$ 
we see that $L$  is a positive linear functional.
Therefore, there exists a measure $m$ on the compact $\cl{f(X}) \se \r$ such that
\[ L(\phi) =  \rho(\tilde \phi \circ f) = \int_{\cl{f(X)}}  \phi  \ \, dm \]  
for each $ \phi \in C(\cl{f(X)}) $. 
Thus, for  each $\phi \in C(\cl{f(X)})$ with $\phi(0) = 0$ we have:
\begin{eqnarray} \label{meas mf}
\rho(\phi \circ f) = \int_{\cl{f(X)}}  \phi  \ \, dm.  
\end{eqnarray}
We shall show now that $m_f = m$ on open intervals in $ \cl{f(X)} \sm  \{0\}$. 
Let $ W= (\alpha, \beta)$ be such an interval. 
Choose a sequence of compact sets
$ \{ C_n\}_{n=1}^{\infty}$ such that
$C_n \se C_{n+1}^{0} \se W$ and
$ \bc_{n=1}^{\infty} C_n = W$.
For each $n$ choose an Urysohn function $\phi_n$ such that 
$ 0 \le \phi_n \le 1, \ 
\phi_n = 1$ on $C_n$ and $ \supp \phi_n \se W$.
Note that $ 0 \le \phi_n \circ f  \le 1$,
and $\phi_n \circ f$ has compact support contained in $f^{-1} (W)$. 
By Definition \ref{mrLC} 
applied to $\mu = \mu_{\rho}$ and Lemma \ref{mfmeas} applied to an open set 
$f^{-1}(W)$ we see that
\begin{eqnarray} \label{vs ineq}
 \rho(\phi_n \circ f) &\le& \mu(f^{-1}(W)) = m_f (W).
\end{eqnarray}
Since $m$ is a measure,  using (\ref{meas mf}) and (\ref{vs ineq}) 
we have:
\begin{eqnarray} \label{limit}
 m(W) = \lim_{n \rightarrow \infty} \int_{\r}  \phi_n \ \, d m 
= \lim_{n \rightarrow \infty} \rho(\phi_n \circ f) \le m_f ( W). 
\end{eqnarray}
On the other hand, given $\eps >0$, choose compact set $ K \se f^{-1}(W)$ 
such that 
$\mu(K) > \mu(f^{-1}(W)) - \eps$.
The set $f(K)$ is compact, and by choice of $ \{ C_n\}_{n=1}^{\infty}$ 
there exists $ n'  \in \N$ such that $ f(K) \se C_n^{0} \se C_n$ for all $ n\ge n'$.
Then  for all $n \ge n'$ we have $1_{K}  \le \phi_n \circ f $ and
using  part \ref{surho} of Lemma  \ref{propmrLC}  
we see that 
$$ \mu(f^{-1}(W)) - \eps < \mu(K) \le \rho(\phi_n \circ f). $$
Then as in (\ref{limit})
$$ m(W)  = \lim_{n \rightarrow \infty} \rho(\phi_n \circ f) \ge \mu(f^{-1}(W)) -\eps
= m_f(W) -\eps .$$
Therefore, $m_f(W) = m(W)$.  Then $m_f = m$ on all open sets in $ \cl{f(X)} \sm  \{0\}$.                                                                                                                                                                                                                                                                                                                                                                                                                                                                                                                                                              
By formula (\ref{intG}),
$$ \rho(f) = \rho( id \circ f) = \int_{\cl{f(X)}} id \ \, dm  =  \int_{\cl{f(X)}}  id \ \, dm_f  = \rho_{\mu} (f),$$
so $ \rho = \rho_{\mu}$.

Now we need to show uniqueness of $\mu$. Suppose there are 
topological measures $\mu$ and $\nu$ such that $\rho_{\mu} = \rho_{\nu} = \rho$, and 
$ \mu \ne \nu$. Then there exists $ U \in \ox$ with 
$ \mu(U) < \nu(U)$. 
Pick a compact set $K$ such that $ K \se U , \ \mu(U)  < \nu(K)$.
Let $f \in \fcsx$ be a function such that $1_K \le f \le 1_U$.
Then using Lemma  \ref{propmrLC} 
\[ \rho_{\mu}(f) \le \mu(U) < \nu(K) \le \rho_{\nu}(f) ,\]
i.e. $ \rho_{\mu} \ne \rho_{\nu}$.
This contradiction shows the uniqueness of $\mu$, and the proof is complete.
\end{proof}

\begin{remark}
Our proof of Theorem \ref{art}  is a combination of techniques from \cite{Aarnes:TheFirstPaper},  \cite{Alf:ReprTh},
\cite{Grubb:Lectures}, and \cite{Grubb:Signed}.
\end{remark}

\begin{remark} \label{QLFest}
Theorem \ref{art} and existence of topological measures that are not measures (see Remark \ref{MTMprim}) indicate that 
there exist quasi-linear functionals that are not linear. 
Also, in Example \ref{rhoNL} below we construct a quasi-linear functional on $ \r^2$ which is not linear.
\end{remark}

Let $X$ be locally compact.
Let $TM_c(X)$ be the collection of compact-finite topological measures on $X$, 
$\TMkn(X)$ the collection of finite topological measures on $X$.

Let $\Qokn(X)$ denote the collection of all quasi-linear functionals on $\fvix$ with $ \norm \rho \norm < \infty$ 
and  let $QI_c(X)$ be the collection of all quasi-linear functionals on $ \fcsx$.

\begin{theorem} \label{ReprBij}
Let $\QIkn = \Qokn(X)$ or functionals from $QI_c(X)$ of finite norm.
\begin{enumerate}[label=(\Roman*),ref=(\Roman*)]
\item 
The map $\Pi : TM_c(X) \longrightarrow QI_c(X) $ where $\Pi(\mu) = \rho_{\mu}$, is an order-preserving bijection
with $\Pi^{-1} $ given by  $\Pi^{-1} (\rho) = \mr$, and 
$\mu$ is a measure iff $\Pi(\mu)$ is a linear functional.
\item
The map $\Pi : \TMkn(X) \longrightarrow \QIkn$ (where $\Pi(\mu) = \rho_{\mu}$, and $\Pi^{-1} (\rho) = \mr$)
is an order-preserving bijection such that $ \norm \rho \norm = \norm \mu \norm$.
\end{enumerate}
\end{theorem}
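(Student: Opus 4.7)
The plan is to derive both statements from the machinery already established --- Theorem~\ref{art} (which provides $\rho = \rho_{\mu_\rho}$ together with uniqueness of $\mu$), Theorem~\ref{mu to rho} (well-definedness of $\Pi$), Lemma~\ref{MUrhoMU} (relating $\rho_\mu$ to $\mu$), and Definition~\ref{mrLC} (the candidate inverse). Well-definedness is immediate from Theorem~\ref{mu to rho}: part (B) places $\rho_\mu$ in $QI_c(X)$ when $\mu$ is compact-finite, while part (A) places it in $\Qokn(X)$ when $\mu$ is finite. Surjectivity is just Theorem~\ref{art}, which exhibits $\mr$ as a preimage of $\rho$ lying in $TM_c(X)$ (case B) or $\TMkn(X)$ (case A). For injectivity, I would apply Theorem~\ref{art} to the functional $\rho_\mu$ itself: both $\mu$ and $\mu_{\rho_\mu}$ then represent $\rho_\mu$, and the uniqueness clause forces $\mu = \mu_{\rho_\mu}$. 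Consequently $\Pi$ and $\rho \mapsto \mr$ are two-sided inverses.

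For the order-preservation clause, the forward direction uses a layer-cake identity for $\rho_\mu(f)$ with $f \ge 0$, obtained by combining Definition~\ref{rhomu} with Lemma~\ref{mfmeas} and rewriting the integral $\int_\r id \, dm_f$ as an integral over $s \in (0,\infty)$ of $\mu$ applied to the open super-level sets of $f$; pointwise monotonicity of $\mu \mapsto \mu(\{f > s\})$ in $\mu$ then integrates to monotonicity of $\rho_\mu$. The reverse direction is cleaner still: if $\rho_\mu(f) \le \rho_\nu(f)$ holds for all admissible $f \ge 0$, then Definition~\ref{mrLC} (which writes $\mr(U)$ as a supremum of values of $\rho$ on such $f$) gives $\mu_{\rho_\mu} \le \mu_{\rho_\nu}$ on open sets, hence on closed sets by \ref{TM3}, and the bijection just established identifies the two sides with $\mu$ and $\nu$.

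For part (I), the forward direction of the equivalence ``$\mu$ is a measure iff $\Pi(\mu)$ is linear'' is Remark~\ref{ifMUmeas}. For the converse, a positive linear functional $\rho$ on $\fcsx$ produces, via the classical Riesz representation theorem for locally compact Hausdorff spaces, a Radon measure $\nu \in \M(X)$ satisfying $\rho(f) = \int f \, d\nu$; restricting $\nu$ to $\ox \cup \cx$ yields a topological measure, and $\rho = \rho_\nu$ by Remark~\ref{ifMUmeas}. Uniqueness in Theorem~\ref{art} then identifies $\mr$ with $\nu$, so in particular $\mr \in \M(X)$.

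For the isometric statement in part (II), Theorem~\ref{mu to rho}(A) already supplies $\norm \rho_\mu \norm \le \mu(X)$. The reverse inequality uses the bijection: $\mu(X) = \mu_{\rho_\mu}(X)$, and by Definition~\ref{mrLC} the latter equals $\sup\{\rho_\mu(f) : f \in \fcsx,\, 0 \le f \le 1\}$, which is bounded above by $\norm \rho_\mu \norm$ since $\fcsx \se \fvix$. No step poses a serious obstacle --- the substantive work was done in Theorem~\ref{art} --- but I would be most careful at the Riesz step, verifying that the Radon measure it produces really lies in the regularity class $\M(X)$ used in Remark~\ref{ifMUmeas}, so that the composition $\rho \mapsto \nu \mapsto \rho_\nu$ genuinely returns $\rho$.
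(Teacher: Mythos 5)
Your proposal is correct and follows essentially the same route as the paper: well-definedness of $\Pi$ from Theorem~\ref{mu to rho}, surjectivity and the formula for $\Pi^{-1}$ from Theorem~\ref{art}, and the norm identity from the two inequalities $\norm \rho_{\mu} \norm \le \mu(X)$ and $\mu(X) = \mu_{\rho_{\mu}}(X) \le \norm \rho_{\mu} \norm$. The one genuine (and harmless) deviation is at the step $\mu = \mu_{\rho_{\mu}}$: the paper proves this by a direct two-sided comparison, using Lemma~\ref{MUrhoMU} against Definition~\ref{mrLC} on open sets and against part~\ref{surhoA} of Lemma~\ref{propmrLC} on compact sets, whereas you deduce it from the uniqueness clause of Theorem~\ref{art} applied to the functional $\rho_{\mu}$ --- a legitimate shortcut; your layer-cake argument for order-preservation and your Riesz-theorem argument for the ``$\mu$ is a measure iff $\Pi(\mu)$ is linear'' equivalence correctly supply details that the paper dispatches with ``it is easy to see'' and a citation of Remark~\ref{ifMUmeas}.
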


\begin{proof}
\begin{enumerate}[label=(\Roman*),ref=(\Roman*)]
\item \label{par1}
Given $ \mu \in TM_c(X)$, obtain by Theorem \ref{mu to rho} a quasi-linear functional $ \rho_{\mu}$ on $ \fcsx$.
By Theorem \ref{art} obtain from $\rho_{\mu}$ 
a compact-finite topological measure $ \mu_{\rho_{\mu}}$ 
We shall show that $ \mu = \mu_{\rho_{\mu}}$.
For open sets by Definition \ref{mrLC} and Lemma \ref{MUrhoMU} we have:
$$  \mu_{\rho_{\mu}} (U) = \sup \{ \rho_{\mu} (f) :  \  f \in \fcsx, 0\le f \le 1, \supp f \se U  \} \le \mu(U), $$ 
so by Remark \ref{DTMagree}  $\mu_{\rho_{\mu}} \le \mu$.  
For compact sets by part \ref{surhoA} of Lemma \ref{propmrLC} and Lemma \ref{MUrhoMU} we have:
$$  \mu_{\rho_{\mu}}(K) = \inf \{ \rho_{\mu} (g): \   g \in \fcsx, 0 \le g \le 1, g=1 \mbox{  on   } K \} \ge \mu(K),$$ 
so by Remark \ref{DTMagree}  $\mu_{\rho_{\mu}} \ge \mu$.   Thus, $ \mu = \mu_{\rho_{\mu}}$.
It follows that $\Pi^{-1}  \circ  \Pi = id$ and $\Pi \circ \Pi^{-1} = id$.

From formula (\ref{rmformula}) and the relationship between $m_f$ and $\mu$ in Lemma \ref{mfmeas} it is easy to see that
$ \Pi$ is order-preserving. The rest follows from Remark \ref{ifMUmeas}. 
\item
Given $ \mu \in \TMkn(X)$ 
obtain by Theorem \ref{mu to rho} a quasi-linear functional $ \rho_{\mu}$ with
$ \norm \rho_{\mu} \norm \le \mu(X)$. By Theorem \ref{art} obtain from $\rho_{\mu}$ 
a finite topological measure $ \mu_{\rho_{\mu}}$ 
with $ \mu_{\rho_{\mu}}(X) = \norm \rho_{\mu} \norm$. 
As in part \ref{par1}, $ \mu = \mu_{\rho_{\mu}}$, and we see that 
$\mu(X) =  \mu_{\rho_{\mu}}(X) = \norm \rho_{\mu} \norm  \le \mu(X)$, 
so $  \norm \rho_{\mu} \norm  = \mu(X) = \norm \mu \norm. $
\end{enumerate}
\end{proof}

\section{Properties of quasi-integrals} \label{SeSvvaQLF}

\begin{remark} \label{LebSt1}
From Theorem \ref{art} it follows that
on a locally compact space $X$ any quasi-linear functional $\rho$ on $ \fvix$ with $\norm \rho \norm < \infty$
or any quasi-linear functional $\rho$ on $ \fcsx$  
is given by 
$$ \rho (f)= \int f \, d \mu =\int_{\r} id \  \, dm_f =  \int_{[a,b]} id \  \, dm_f,  $$
where $m_f$ is a measure obtained from topological measure $ \mu$ and the function $f$, 
and supported on $[a,b] = \cl{f(X)}$ as in Lemma \ref{mfmeas}.
We may also take $[a,b]$ to be any closed interval containing $ \cl{f(X)}$. 
The integral 
$$ \int_{[a,b]} id \  \, dm_f = \int_{[a,b]} id \  \, dF $$
is the Riemann-Stieltjes integral over $[a,b]$ with respect to the function $F$ given by Definition \ref{distrF}. 
It is easy to see (apply, for example, theorem (21.67) in \cite{HS}) that 
\begin{align*} 
\rho (f) =  \int_{[a,b]} id \  \, dm_f = \int_{[a,b]} F(t) \,dt + a F(a^-).  
\end{align*}
If $ \norm \rho \norm < \infty$ then $ \mu(X) < \infty $, and we have
\begin{align} \label{rhoLebSt}
\rho (f) =  \int_{[a,b]} id \  \, dm_f = \int_{[a,b]} F(t) \,dt + a \mu(X).  
\end{align}
If $ \norm \rho \norm < \infty$ and $ f \ge 0$ then $ a=0$ and  
\begin{align} \label{rhoLebStA}
\rho (f) =  \int_{[a,b]} id \  \, dm_f = \int_{[a,b]} F(t) \,dt.  
\end{align}

\end{remark}

\begin{lemma} \label{Monotrho}
Let $X$ be locally compact. Suppose
\begin{itemize}
\item[(A)]
$\rho$ is a quasi-linear functional on $\fvix$ such that
$\norm \rho \norm  < \infty$  or
\item[(B)]
$\rho$ is a quasi-linear functional on $ \fcsx.$
\end{itemize}
If $f \ge g$ then  $\rho(f) \ge \rho(g)$.
\end{lemma}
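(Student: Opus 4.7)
The plan is to dispatch case (B) immediately and then handle case (A) by passing to the representing topological measure. Case (B) is nothing more than part \ref{monot} of Lemma \ref{linearity}, so it requires no additional argument beyond citing that result. Case (A) is the genuine improvement alluded to in the preceding remark: now $f,g$ are allowed to lie in $\fvix$ rather than in $\fcsx$, so one cannot appeal directly to \ref{monot}.

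For case (A) the idea is to invoke Representation Theorem \ref{art} to write $\rho = \rho_{\mu}$ for a uniquely determined finite topological measure $\mu$ with $\mu(X)=\norm\rho\norm$, and then compare $\rho(f)$ and $\rho(g)$ through their Riemann--Stieltjes expressions. Concretely I would proceed as follows. First, using Remark \ref{miscRg}, the sets $\cl{f(X)}$ and $\cl{g(X)}$ are compact intervals of $\R$ containing $0$, so I may pick one common closed interval $[a,b]$ containing both. Second, I apply formula (\ref{rhoLebSt}) of Remark \ref{LebSt1} with this common interval to get
\[
\rho(f) = \int_{a}^{b} F_{f}(t)\,dt + a\,\mu(X), \qquad
\rho(g) = \int_{a}^{b} F_{g}(t)\,dt + a\,\mu(X),
\]
where $F_{h}(t) = \mu(h^{-1}(t,\infty))$ is the distribution function of $h$ from Definition \ref{distrF}. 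The crucial point justifying this step is the freedom, explicitly recorded in Remark \ref{LebSt1}, to use any closed interval containing the closure of the range. Subtracting, the offset $a\mu(X)$ cancels and
\[
\rho(f) - \rho(g) = \int_{a}^{b} \bigl(F_{f}(t) - F_{g}(t)\bigr)\,dt.
\]
Third, from $f \ge g$ pointwise I get the open-set inclusion $f^{-1}(t,\infty) \supseteq g^{-1}(t,\infty)$ for every $t\in\R$, and since $\mu$ is monotone on $\ox\cup\cx$ by Remark \ref{DTMagree}, this yields $F_{f}(t) \ge F_{g}(t)$ for every $t$. Hence the integrand is nonnegative and $\rho(f) \ge \rho(g)$.

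I do not expect any serious obstacle: the only point requiring care is the choice of a single closed interval $[a,b]$ common to both functions, so that the $a\mu(X)$ terms cancel upon subtraction. Once the representation theorem and the distribution-function formula are in hand, the monotonicity of $\mu$ on open sets does all the remaining work.
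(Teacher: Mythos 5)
Your proposal is correct and follows essentially the same route as the paper: case (B) is disposed of by citing part \ref{monot} of Lemma \ref{linearity}, and case (A) is handled by choosing a common interval $[a,b]$ containing both $\cl{f(X)}$ and $\cl{g(X)}$, applying formula (\ref{rhoLebSt}), and using $F_f \ge F_g$ (which follows from monotonicity of $\mu$) together with the cancellation of the $a\,\mu(X)$ terms. The paper states this more tersely, but the steps you spell out are exactly the ones it relies on.
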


\begin{proof}
Case (B) is proved in Lemma \ref{linearity}, part \ref{monot}.
For case (A), define distribution functions $F$ and $G$ for $f$ and $g$ as in
Definition \ref{distrF}. Let $[a,b]$  contain both $\cl{f(X)}$ and $ \cl{g(X)}$. Since
$ \mu(X) < \infty$ and $F \ge G$,  the assertion follows from formula (\ref{rhoLebSt}).
\end{proof}

\begin{definition}
We say a functional $ \rho$ is monotone if $ f \le g  \Longrightarrow \rho(f) \le \rho(g)$.
\end{definition}

\begin{remark} \label{rhoisnorm}
From Lemma \ref{Monotrho} we see that if  $\rho \in \Qokn (X)$ or $ \rho \in QI_c (X) $ 
then   
\[ \norm \rho \norm =\sup  \{ \rho(f): \ f \in \fvix, \  0 \le f \le 1\} = \sup  \{ \rho(f): \ f \in \fvix, \  \ \norm f \norm \le 1\},  \]   
and 
\[ \norm \rho \norm =\sup  \{ \rho(f): \ f \in \fcsx, \  0 \le f \le 1\} =  \sup  \{ \rho(f): \ f \in \fcsx, \  \ \norm f \norm \le 1\}.  \]  
\end{remark}

\begin{theorem} \label{contCc}
Suppose $X$ is locally compact and $\rho$ is a quasi-linear functional  on $\fcsx$ or on $\fvix$.
\begin{enumerate}[label=(\roman*),ref=(\roman*)]
\item \label{unconCmp}
Suppose $\mu$ is compact-finite.
If $f, g \in \fcsx, \, f,g \ge 0, \supp f, \supp g \se K$, where $K$ is compact, then 
\[ | \rho(f ) -\rho(g) | \le \norm f-g \norm \, \mu(K) .\]
In particular,  for any $ f \in \fcsx$
\[ | \rho(f) | \le \norm f \norm \, \mu(\supp f).\]
If $f, g \in \fcsx, \, \supp f, \supp g \se K$, where $K$ is compact, then 
\[ | \rho(f ) -\rho(g) | \le 2  \norm f-g \norm \,  \mu(K) .\]
Thus, $ \rho$ is continuous with respect to the topology of uniform convergence on compact sets.
\item \label{Contbd}
Suppose $\mu(X) < \infty$ (i.e. $ \norm \rho \norm < \infty$.)
If $ f \in \fcsx$ then
\begin{align} \label{confbd}
 | \rho(f) | \le \norm f \norm \, \mu(X) = \norm f \norm \, \norm \rho \norm. 
\end{align}
If $f, g \in \fcsx, \, f,g \ge 0$ then 
\[ | \rho(f ) -\rho(g) | \le \norm f-g \norm \, \mu(X) =  \norm f-g \norm \, \norm \rho \norm  .\]
For arbitrary $f, g \in \fcsx $ 
\[ | \rho(f ) -\rho(g) | \le 2  \norm f-g \norm \,  \mu(X)  =  2  \norm f-g \norm \,  \norm \rho \norm  .\]
Thus, $ \rho$ is uniformly continuous.
\item
Let $X$ be compact. Let $\rho$ be a quasi-linear functional on $C(X)$.
Then for any $ f,g \in C(X)$  
\[ | \rho(f) - \rho(g)| \le \rho(\norm f-g \norm) = \norm f-g \norm \rho(1). \]
In particular, $\rho $ is uniformly continuous.
If $ f+ g = c =const$ then $ \rho(f) + \rho(g) = \rho(c)$.
\end{enumerate}
\end{theorem}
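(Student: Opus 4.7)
The plan is to derive all three parts from the Riemann--Stieltjes representation of Remark \ref{LebSt1} combined with the sign splitting $\rho(f) = \rho(f^+) - \rho(f^-)$ from Lemma \ref{linearity} part \ref{rho product}. For part \ref{unconCmp} I first treat nonnegative $f \in \fcsx$ with $\supp f \subseteq K$: then $a = 0$ in Remark \ref{LebSt1}, the distribution function $F$ from Definition \ref{distrF}(B) satisfies $F(t) = \mu(\{f > t\}) \le \mu(\supp f)$ for $t > 0$ and vanishes past $\norm f \norm$, so $\rho(f) = \int_0^\infty F(t)\,dt \le \norm f \norm \mu(\supp f)$. For nonnegative $f, g$ with supports in $K$, set $\epsilon = \norm f-g \norm$ and split the integral at $t = \epsilon$: on $(0, \epsilon]$ the crude bound $F \le \mu(K)$ contributes at most $\epsilon \mu(K)$, while on $(\epsilon, \infty)$ the pointwise inequality $|f-g| \le \epsilon$ yields $\{f > t\} \subseteq \{g > t-\epsilon\}$ and hence $F(t) \le G(t-\epsilon)$, so a change of variable gives $\int_\epsilon^\infty F(t)\,dt \le \int_0^\infty G(s)\,ds = \rho(g)$. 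Together with symmetry this produces $|\rho(f) - \rho(g)| \le \epsilon \mu(K)$.

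For signed $f \in \fcsx$ the splitting yields $-\rho(f^-) \le \rho(f) \le \rho(f^+)$, hence $|\rho(f)| \le \max\{\rho(f^+), \rho(f^-)\} \le \norm f \norm \mu(\supp f)$; for signed $f, g$ with supports in $K$ I apply the nonnegative Lipschitz bound to $f^\pm$ and $g^\pm$ separately, using $|f^\pm - g^\pm| \le |f-g|$ and the fact that $\supp f^\pm, \supp g^\pm \subseteq K$, which delivers the factor of $2$ in the final claim of part \ref{unconCmp}. Part \ref{Contbd} is then immediate: $\mu(X) < \infty$ makes $\mu$ compact-finite with $\mu(K) \le \mu(X) = \norm \rho \norm$ for every compact $K$, so choosing $K$ to contain the relevant supports carries each bound from \ref{unconCmp} to its counterpart in \ref{Contbd}, and the first inequality $|\rho(f)| \le \norm f \norm \norm \rho \norm$ follows from Remark \ref{rhoisnorm} applied to $\rho(f^\pm)$ together with $|\rho(f)| \le \max\{\rho(f^+), \rho(f^-)\}$.

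For part (iii), in the compact case every singly generated subalgebra contains the constants (Remark \ref{constants}). Applying Lemma \ref{linearity} part \ref{monconst} to the constant $c = \norm f-g \norm$, which trivially equals $c$ on $\{g \ne 0\}$, gives $\rho(g + c) = \rho(g) + c \rho(1)$; combining this with monotonicity (Lemma \ref{Monotrho}) applied to the pointwise inequality $f \le g + c$ yields $\rho(f) \le \rho(g) + c \rho(1)$, and symmetry closes the Lipschitz estimate, while $c \rho(1) = \rho(c) = \rho(\norm f-g \norm)$ is immediate from \ref{QIconsLC}. The final assertion, that $f + g = c$ forces $\rho(f) + \rho(g) = \rho(c)$, follows from \ref{QIlinLC}: writing $g = c - f = \phi \circ f$ with $\phi(t) = c - t$ places $g$ inside the singly generated subalgebra $A(f)$, whereupon additivity on $A(f)$ finishes the argument.

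The main technical subtlety is the behaviour of $F$ near $t = 0$ in case (B), where $F$ is built from $(t, \infty) \setminus \{0\}$ rather than $(t, \infty)$; splitting the integral at $t = \epsilon$ sidesteps this issue, since the shift argument is then applied only for $t - \epsilon > 0$ and the region $(0, \epsilon]$ is absorbed by the uniform bound $F \le \mu(K)$. Everything else reduces to routine bookkeeping with distribution functions and sign splittings.
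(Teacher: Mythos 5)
Your proposal is correct, but the core estimate in part \ref{unconCmp} is obtained by a genuinely different route than the paper's. The paper argues intrinsically: it picks a bump function $h \in \fcsx$ with $h = 1$ on $K$, uses $f \le g + \norm f - g\norm h$ together with monotonicity (Lemma \ref{Monotrho}) and the quasi-additivity $\rho(g + \lambda h) = \rho(g) + \lambda\rho(h)$ from part \ref{bumpfs} of Lemma \ref{linearity} to get $|\rho(f) - \rho(g)| \le \norm f-g \norm \rho(h)$, and then invokes part \ref{surho} of Lemma \ref{propmrLC} to replace $\inf_h \rho(h)$ by $\mu(K)$; this needs only the construction $\rho \mapsto \mr$, not the full integral representation. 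You instead run a layer-cake argument on the distribution functions: $\rho(f) = \int_0^\infty F(t)\,dt$ for $f \ge 0$, the inclusion $\{f > t\} \se \{g > t - \eps\}$ giving $F(t) \le G(t-\eps)$ for $t > \eps$, and the crude bound $F \le \mu(K)$ on $(0,\eps]$. This leans on Remark \ref{LebSt1} and hence on the full Representation Theorem \ref{art}, but once that is in hand it is mechanical, and your explicit handling of the $t=0$ issue in case (B) (applying the shift only for $t - \eps > 0$) is a genuine point of care that the paper's route never has to confront. Your treatment of the signed case via $-\rho(f^-) \le \rho(f) \le \rho(f^+)$ is a slight variant of the paper's $|\rho(f^+)-\rho(f^-)| \le \norm f^+ - f^-\norm \mu(\supp f)$, and parts \ref{Contbd} and (iii) follow the paper essentially verbatim (the paper cites Remark \ref{constants} directly where you route through part \ref{st2}/\ref{monconst} of Lemma \ref{linearity}, but these amount to the same thing in the compact case). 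Both approaches are sound; the paper's is the more economical in its use of prior machinery, yours makes the Lipschitz constant $\mu(K)$ appear transparently from the distribution function bound $0 \le F \le \mu(\supp f)$ of Lemma \ref{meas muf}.
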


\begin{proof}
\begin{enumerate}[label=(\roman*),ref=(\roman*)]
\item 
It is enough to consider $K =  \supp f \cup  \supp \, g$. 
Suppose first that  $f, g \in \fcsx, \, f,g \ge 0$.
Let $ h \in \fcsx$ be such that $ h \ge 0,  h = 1$ on $K$.
Since $f-g \le \norm f-g \norm \,  h  $, i.e. $ f \le g + \norm f-g \norm \,  h $, 
by Lemma \ref{Monotrho} $\rho(f) \le \rho(g + \norm f-g \norm \, h ) $. 
Using part \ref{bumpfs} of Lemma \ref{linearity} we have:
\[ \rho(f) \le \rho(g + \norm f-g \norm \, h ) =  \rho(g) + \norm f-g \norm \, \rho(h) \] 
Similarly, $ \rho(g) \le \rho(f) + \norm f-g \norm \ \rho(h) $.
Thus, 
$$| \rho(f ) -\rho(g) | \le  \norm f-g \norm \,   \rho(h).$$ 
Using part \ref{surho} of Lemma \ref{propmrLC} we have: 
\begin{eqnarray}  \label{rhofgh1}
| \rho(f ) -\rho(g) | \le  \norm f-g \norm \, \mu(K) .
\end{eqnarray}
If $ f \in \fcsx$ then 
\[ | \rho(f) | = | \rho(f^+) - \rho(f^-) | \le \norm f^+ - f^- \norm \, \mu(\supp f) = \norm f \norm \, \mu(\supp f) .\]
For arbitrary  $f, g \in \fcsx$ we have:
\begin{align*}
| \rho(f ) -\rho(g) | &= |  \rho(f^+) - \rho(f^-) -\rho(g^+) + \rho(g^-)| \\
&\le  |  \rho(f^+)  -\rho(g^+) | + | \rho(g^-) - \rho(f^-) |  \\
&\le \norm f^+ - g^+ \norm \mu(K) + \norm g^- - f^- \norm   \mu(K) \le 2 \norm f-g \norm \mu(K).
\end{align*}
\item
Apply part \ref{unconCmp} and equality $ \norm \rho \norm = \mu(X)$ from Theorem \ref{ReprBij}.
\item
Since $f \le g + \norm f-g \norm$ and  $g \le f + \norm f-g \norm$ 
by Lemma \ref{Monotrho} and Remark \ref{constants}
we have $\rho(f) \le \rho(g) + \rho(\norm f-g \norm)$ and   
$\rho(g) \le \rho(f) + \rho(\norm f-g \norm)$, which gives the first assertion.
The last assertion follows from Remark \ref{constants} since $f= -g+c$.
\end{enumerate}
\end{proof}

\begin{remark}
The first assertion of part \ref{unconCmp} in Theorem \ref{contCc} is related to Corollary 3.5 in \cite{Alf:ReprTh}.
\end{remark}

\begin{remark} \label{rhosmall}
Let $X$ be locally compact. Suppose $\rho \in \Qokn (X), f \in \fvix$. Let $ \norm f \norm = b$.
From Remark \ref{LebSt1} $ \rho(f)  = \int_{[-b,b]} id \ dm_f, $ 
so using Lemma \ref{mfmeas} and Theorem \ref{ReprBij}   we obtain inequality (\ref{confbd}) for  functions from  $\fvix$:
\[ |\rho(f) |  \le b m_f(\r)  = b \mu(X) = \norm f \norm \mu(X) =  \norm f \norm \norm \rho \norm . \]
\end{remark}

\begin{proposition} \label{phointor}
Suppose $X$ is locally compact. A quasi-linear functional on $\fvix$ is monotone iff it is bounded.
\end{proposition}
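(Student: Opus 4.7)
One direction is immediate: if $\norm\rho\norm < \infty$, then $\rho$ is a quasi-linear functional on $\fvix$ of finite norm, and Lemma \ref{Monotrho}(A) gives monotonicity directly. So the content of the proposition is the converse implication: monotonicity forces boundedness.

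For the nontrivial direction, I will argue by contradiction. Assume $\rho$ is monotone but $\norm\rho\norm = \infty$. By Definition \ref{normrho}, for each $n \in \N$ one can choose $f_n \in \fvix$ with $0 \le f_n \le 1$ and $\rho(f_n) \ge 4^n$. The idea is to package all the $f_n$ into a single function $g \in \fvix$ that pointwise dominates a rescaled copy of each $f_n$, then use monotonicity and \ref{QIconsLC} to deduce that the single real number $\rho(g)$ exceeds $2^n$ for every $n$, which is impossible.

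Concretely, set
\[
 g = \sum_{n=1}^{\infty} 2^{-n} f_n.
\]
The series converges uniformly by the Weierstrass $M$-test (since $\norm 2^{-n} f_n \norm \le 2^{-n}$), so $g$ is continuous with $0 \le g \le 1$. To verify $g \in \fvix$, given $\eps > 0$ pick $N$ with $\sum_{n > N} 2^{-n} < \eps/2$; then $g$ differs uniformly from the finite partial sum $\sum_{n \le N} 2^{-n} f_n \in \fvix$ by at most $\eps/2$, and this partial sum vanishes outside some compact set, so $g < \eps$ off a compact set. Since all $f_n \ge 0$, we have $g \ge 2^{-n} f_n$ pointwise for every $n$, so by monotonicity and \ref{QIconsLC},
\[
 \rho(g) \ge \rho(2^{-n} f_n) = 2^{-n} \rho(f_n) \ge 2^{-n} \cdot 4^n = 2^n.
\]
Letting $n \to \infty$ contradicts the fact that $\rho(g)$ is a real number, completing the proof.

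The only step that requires a little care is checking that $g \in \fvix$ (not merely in $\cbx$); the rest is a mechanical application of monotonicity and positive homogeneity. The design choice that makes the argument work is the exponential gap $4^n$ versus $2^{-n}$: any summable weights $a_n$ with $a_n \rho(f_n) \to \infty$ would do, and this is the cleanest such pairing.
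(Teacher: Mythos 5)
Your proposal is correct and follows essentially the same route as the paper: the bounded-implies-monotone direction is delegated to Lemma \ref{Monotrho}(A), and the converse is proved by contradiction with the same construction $g=\sum 2^{-n}f_n$ from functions satisfying $\rho(f_n)\ge 2^{2n}$, using monotonicity and positive homogeneity. Your explicit check that $g\in\fvix$ is a small detail the paper leaves implicit.
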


\begin{proof}
($ \Longleftarrow $) is part (A) of Lemma \ref{Monotrho}. ( $ \Longrightarrow$): 
The proof follows that of  Lemma 2.3 in \cite{Alf:ReprTh}.
Suppose to the contrary that $ \rho(f) \in \r$ for every $ f \in \fvix$ but 
$\norm \rho \norm = \infty$. There are functions $f_k \in \fvix, \ 0 \le f_k \le 1$ such that
$\rho(f_k) \ge 2^{2k}$. Consider $ f = \sum_{k=1}^{\infty} 2^{-k} f_k$, so $f \in \fvix$ and $0 \le f \le 1$.
For each $k$ we have $f \ge 2^{-k} f_k$, and so  
$ \rho(f) \ge \rho(2^{-k} f_k) = 2^{-k} \rho(f_k) \ge 2^k, $
i.e. $\rho(f) = \infty$. This gives a contradiction.
\end{proof} 

\begin{theorem}  \label{extfromCc}
Suppose $X$ is locally compact. 
(I) A bounded quasi-integral on $\fcsx$ 
extends uniquely to a bounded quasi-integral on $\fvix$ with the same norm.
(II) A bounded quasi-integral on $\fvix$
is the unique extension of a bounded quasi-integral on $\fcsx$.
\end{theorem}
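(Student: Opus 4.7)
The plan is to leverage the representation theorems of Section~\ref{SeReprTh} and to pass through the underlying topological measure. For part~(I), starting from a bounded quasi-integral $\rho$ on $\fcsx$, I apply Theorem~\ref{art}(B) to produce the unique compact-finite topological measure $\mu = \mr$ with $\rho = \rho_\mu$. By Remark~\ref{muXlerho} one has $\mu(X) = \norm \rho \norm < \infty$, so $\mu$ is in fact finite. Theorem~\ref{mu to rho}(A) then produces a bounded quasi-integral $\tilde\rho := \rho_\mu$ on $\fvix$, and Theorem~\ref{ReprBij}(II) gives $\norm \tilde\rho \norm = \mu(X) = \norm \rho \norm$.

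The substantive step is to verify that $\tilde\rho|_{\fcsx} = \rho$. Fix $f \in \fcsx$. Lemma~\ref{mfmeas} produces two candidate Stieltjes measures on $\r$ for this $f$: one (used in computing $\tilde\rho(f)$) satisfying $m_f(W) = \mu(f^{-1}(W))$ for all open $W \se \r$, and another (used in computing $\rho(f)$) satisfying $m_f(W) = \mu(f^{-1}(W \sm \{0\}))$. These two measures agree on every open subset of $\r \sm \{0\}$, and both assign finite mass to $\{0\}$ because $\mu(X) < \infty$. Since $id(0) = 0$, the observation~\eqref{intG} forces the two values of $\int_\r id \, dm_f$ to coincide, so $\tilde\rho(f) = \rho(f)$.

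For uniqueness of the extension, observe that Definition~\ref{mrLC} constructs the topological measure associated to a quasi-integral on $\fvix$ using only test functions drawn from $\fcsx$. Hence any two bounded extensions $\tilde\rho_1, \tilde\rho_2$ of the same $\rho$ induce the same topological measure on $\ox$, therefore on $\ox \cup \cx$ by \ref{TM3} of Definition~\ref{TMLC}, and Theorem~\ref{art}(A) then forces $\tilde\rho_1 = \tilde\rho_2$.

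Part~(II) is a short consequence: the restriction of a bounded $\tilde\rho \in \Qokn(X)$ to $\fcsx$ inherits axioms \ref{QIconsLC}--\ref{QIpositLC}---the singly generated subalgebra of $f \in \fcsx$ is the same whether computed in $\fcsx$ or in $\fvix$ by Remark~\ref{smsubalg}---and is bounded since its norm is majorized by $\norm \tilde\rho \norm$; part~(I) then produces a unique bounded extension to $\fvix$, which must coincide with $\tilde\rho$. The main obstacle is the measure-identification step in part~(I), which depends crucially on both the finiteness of $\mu$ (otherwise the mass at $\{0\}$ could equal $\mu(\{f=0\}) = \infty$ on a non-compact $X$) and the vanishing $id(0)=0$ that makes~\eqref{intG} applicable; without boundedness neither of these is available.
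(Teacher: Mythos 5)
Your proof is correct, but it takes a genuinely different route from the paper's. The paper proves part (I) by a direct density argument: given $f \in \fvix$, it picks $f_n \in \fcsx$ converging uniformly to $f$, uses the Lipschitz estimate $|\rho(f_n)-\rho(f_m)| \le 2\norm f_n - f_m\norm \, \norm \rho \norm$ from Theorem \ref{contCc} to see that $\rho(f_n)$ is Cauchy with a limit independent of the approximating sequence, and defines the extension as that limit; for part (II) it uses part \ref{apprCoCc1} of Lemma \ref{lineari1} to approximate $f$ by compactly supported functions \emph{inside the subalgebra $B(f)$}, so that $\rho(f)-\rho(f_n) = \rho(f-f_n)$ is controlled and the restriction determines $\rho$. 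You instead factor everything through the representation theorems: $\rho \mapsto \mr \mapsto \rho_{\mr}$ on $\fvix$, with the one genuinely delicate point being that Definition \ref{rhomu} uses two different distribution functions (cases (A) and (B) of Definition \ref{distrF}), whose associated Stieltjes measures you correctly reconcile via (\ref{intG}) using $\mathrm{id}(0)=0$ and finiteness of the mass at $0$. Your uniqueness argument --- that $\mu_{\tilde\rho}$ in Definition \ref{mrLC} is computed from $\fcsx$ test functions only, so any two bounded extensions induce the same topological measure and hence coincide by Theorem \ref{art}(A) --- is clean and arguably tidier than the paper's. What the paper's approach buys is independence from the representation machinery (the extension is exhibited constructively as a uniform limit, and the verification that it is again a quasi-integral is a routine continuity check); what yours buys is that the extension is automatically a quasi-integral by Theorem \ref{mu to rho}(A), the norm identity falls out of Theorem \ref{ReprBij}(II), and uniqueness is immediate --- at the cost of invoking the full weight of Theorems \ref{art} and \ref{ReprBij} and of having to notice the mismatch between the two conventions for $m_f$, which you did. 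Both arguments are legitimate and non-circular, since all the results you cite precede Theorem \ref{extfromCc} in the paper.
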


\begin{proof}
(I) Let $ \rho$ be a quasi-linear functional on  $\fcsx$  with $ \norm \rho \norm < \infty$.
Let $ f \in \fvix$. Choose a sequence of function $ f_n \in \fcsx $ converging uniformly to $f$. 
Since  by Theorem \ref{contCc}  
\[ | \rho(f_n ) -\rho(f_m) | \le  2  \norm f_n-f_m \norm \, \norm \rho \norm, \]
the sequence $\rho(f_n)$ is Cauchy.  Let $ L = \lim_{n \rightarrow \infty} \rho(f_n)$.    
Suppose $g_n$ is another sequence of functions from $\fcsx $ converging to $f$.  
By  Theorem \ref{contCc} 
\[  | \rho(f_n ) -\rho(g_n) | \le 2  \norm \rho \norm  \ \norm f_n-g_n \norm \rightarrow 0, \]
so $\lim_{n \rightarrow \infty} \rho(f_n) = \lim_{n \rightarrow \infty} \rho(g_n)$, 
and the limit $L$ is well defined. We extend $\rho$ from
$\fcsx$ to $\fvix$ by defining $\rho(f) = L$. 

Let $ \phi \circ f \in B(f), \phi(0) = 0$. Since $ \phi \circ f_n$ converges uniformly to $ \phi \circ f$, it is easy to check that 
$\rho$ is a quasi-linear functional on $\fvix$.  Using part \ref{truncaf1} of Lemma \ref{lineari1} we see that the norm of an 
extended functional stays the same. \\
(II). Let $ \rho$ be a bounded quasi-integral on $\fvix$. The restriction of $ \rho$ to $\fcsx$
is a bounded quasi-integral.  
Now let $ f \in \fvix$, and let $ f_n \in \fcsx $ converge to $f$. 
By  part \ref{apprCoCc1} of Lemma \ref{lineari1}  
we may assume that $f_n$ and $f$ are in the subalgebra 
generated by $f$ and $\norm f - f_n \norm \le \frac{1}{n}.$ 
Let $L = \lim \rho(f_n)$ as in part (I). We only need to show that $L = \rho(f)$.
But since $f_n$ and $f$ are in the same subalgebra, using Remark \ref{rhosmall}
we have:
\[  |\rho(f) - \rho(f_n)| = |\rho(f - f_n)|  \le \norm f - f_n \norm \norm \rho \norm  \le   \frac1n  \norm \rho \norm, \]  
so $\lim_{n \rightarrow \infty} \rho(f_n) = \rho(f)  = L$.
\end{proof}

\begin{remark}
Part (I) was first proved (in a different way) in \cite{Alf:ReprTh}, Corollary 3.10.
\end{remark} 

Using Proposition \ref{phointor},  Remark \ref{rhosmall}, and Theorem \ref{extfromCc} 
we may extend  part \ref{Contbd} of Theorem \ref{contCc} 
to functions vanishing at infinity:

\begin{corollary} \label{CorcontCc}
Suppose $ \rho$ is a bounded quasi-linear functional on $ \fvix$. 
If $f \in \fvix$ then 
 \[ | \rho(f ) | \le \norm f \norm \, \mu(X)  =  \norm f \norm \, \norm \rho \norm  .\]
If $f,g  \in \fvix, \, f,g \ge 0$ then 
\[ | \rho(f ) -\rho(g) | \le \norm f-g \norm \, \mu(X)  =  \norm f-g \norm \, \norm \rho \norm  .\]
For arbitrary $f, g \in \fvix $ 
\[ | \rho(f ) -\rho(g) | \le 2  \norm f-g \norm \,  \mu(X)  = 2  \norm f-g \norm \,  \norm \rho \norm  .\]
\end{corollary}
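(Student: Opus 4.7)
The plan is to reduce every inequality to its counterpart for functions in $\fcsx$ (already known from part \ref{Contbd} of Theorem \ref{contCc}) by using a density/approximation argument, together with the fact that any bounded quasi-linear functional on $\fvix$ is monotone (Proposition \ref{phointor}).

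The first inequality $|\rho(f)| \le \norm f \norm \, \norm \rho \norm$ for $f \in \fvix$ is essentially recorded already in Remark \ref{rhosmall}; it follows from the Stieltjes representation (\ref{rhoLebSt}) of $\rho$ via the finite topological measure $\mu = \mu_\rho$ and the bound $m_f(\r) = \mu(X) = \norm \rho \norm$ from Theorem \ref{ReprBij}. So there is nothing new to do for that part.

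For the second inequality, fix $f, g \ge 0$ in $\fvix$. By part \ref{apprCoCc1} of Lemma \ref{lineari1} I can choose sequences $f_n, g_n \in \fcsx$ with $f_n \to f$ and $g_n \to g$ uniformly; since the truncation construction in the proof of that lemma preserves nonnegativity, we may assume $f_n, g_n \ge 0$. Then part \ref{Contbd} of Theorem \ref{contCc} gives
\[ |\rho(f_n) - \rho(g_n)| \le \norm f_n - g_n \norm \, \norm \rho \norm. \]
By Theorem \ref{extfromCc}(II), the restriction of $\rho$ to $\fcsx$ extends uniquely to the given $\rho$ on $\fvix$ with the same norm, so $\rho(f_n) \to \rho(f)$ and $\rho(g_n) \to \rho(g)$; since $\norm f_n - g_n \norm \to \norm f - g \norm$, passing to the limit yields the second inequality.

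For the third (general) inequality, I will decompose: by part \ref{rho product} of Lemma \ref{linearity}, $\rho(h) = \rho(h^+) - \rho(h^-)$ for every $h \in \fvix$, so
\[ |\rho(f) - \rho(g)| \le |\rho(f^+) - \rho(g^+)| + |\rho(f^-) - \rho(g^-)|. \]
Both $(f^+, g^+)$ and $(f^-, g^-)$ are pairs of nonnegative functions in $\fvix$, so the second inequality applies; combined with the elementary bounds $\norm f^+ - g^+ \norm \le \norm f - g \norm$ and $\norm f^- - g^- \norm \le \norm f - g \norm$, this gives the factor of $2$. The only mildly delicate point to check is that the approximation in step two really can be taken nonnegative and that the uniform convergence $f_n \to f$ survives the $f^\pm$ decomposition — but both are routine consequences of Lemma \ref{lineari1}\ref{apprCoCc1} applied separately to $f^+$ and $f^-$, so I anticipate no real obstacle in writing the proof out in full.
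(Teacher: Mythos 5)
Your proposal is correct and follows essentially the same route the paper indicates: it uses Remark \ref{rhosmall} for the first bound and extends part \ref{Contbd} of Theorem \ref{contCc} from $\fcsx$ to $\fvix$ via uniform approximation and the extension Theorem \ref{extfromCc}, with the $f^{\pm}$ decomposition from Lemma \ref{linearity} supplying the factor of $2$. The details you flag (nonnegativity of the truncation approximants, convergence $\rho(f_n)\to\rho(f)$) all check out against the cited lemmas.
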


\begin{remark}
In symplectic topology, a functional $\eta$ on $ \fcsx$ is Lipschitz continuous if for every compact $ K \se X$ there is a number $N_K \ge 0$ such that
$ | \eta(f) - \eta(g) | \le N_K \| f - g \|$ for all $ f,g$ with support contained in $K$. Our Theorem \ref{contCc} and Corollary \ref{CorcontCc} say that $ \rho$ 
is Lipschitz continuous.
\end{remark}  

\begin{remark}
Although we do not always state this explicitly, all results in this paper remain valid 
(with algebras $B(f) $ replaced by $A(f)$ and simpler proofs) for quasi-linear functionals on
a compact space. In particular,  we obtain all known properties and the representation theorem for quasi-linear functionals on 
$C(X)$ when $X$ is compact.
\end{remark} 

We conclude with an example of a quasi-integral  on $\r^2$.
 
\begin{example} \label{rhoNL}
Let $X = \r^2$. 
Let $K$ be the closed rectangle with vertices $(1,5), (1,7),\\
(7,7), (7,5)$, and 
$C$ be the closed rectangle with vertices $(5,1), (5,7), (7,7), (7,1)$. 
Choose five points as follows: three points in the interior of the square $K \cap C$, 
one point in the interior of $ K \sm C$, 
and one point in the interior of  $ C \sm K$.      
Let $ \mu$ be the topological measure as in Example 18 in \cite{Butler:TechniqLC} based on the five chosen points, 
i.e. for a compact solid or a bounded open solid set $A$ we have $\mu(A) = 0$ if $A$ 
contains no more then 1 of the chosen points,
$\mu(A) = 1/2 $ if $A$ contains 2 or 3 of the chosen points, and 
$\mu(A) = 1 $ if $A$ contains at least 4 of the chosen points.  
(A set is called solid, if it is connected and its complement has only unbounded components.) 
Let $\rho$ be the quasi-linear functional on $\fvix$ corresponding to $\mu$ according to Theorem \ref{art}.
Let $ b >0$ and let $f \in \fcsx$ be the function such that  $ f= b $ on $K$ and $ \supp f \se U$, 
where $U$ is an open rectangle containing $K$ but not the chosen point in $ C \sm K$. Similalry, 
let  $g \in \fcsx$ be the function such that  $ g= b $ on $C$ and $ \supp f \se V$, 
where $V$ is an open rectangle containing $C$ but not the chosen point in $ K \sm C$. 
Let $h= f+g$. Let $F$ and $H$ be the distribution functions of $f$ and $h$ respectively as in Definition \ref{distrF}.
Since $f(X) = [0,b]$ and $h(X) = [0 , 2b]$, from formula (\ref{rhoLebStA}) we have: 
$$ \rho(f) = \int_0^b F(t) \, dt, \ \ \ \  \rho(h) = \int_0^{2b} H(t) \,  dt.  $$ 
Observe that $F(t) = 1$ for $ t \in (0, b)$. Then $ \rho(f) = b$. In the same way, $\rho(g) = b$. 
Since $H(t) = 1$ for $ t \in (0, b)$ and $H(t) = 1/2$ for $ t \in [b, 2b)$, we have $\rho(f+g) = \rho(h) = 1.5 \, b$. 
Thus, $ \rho(f) + \rho(g) \neq \rho(f+g)$, and the functional $ \rho$ is not linear.
\end{example} 

{\bf{Acknowledgments}}:
This  work was conducted at the Department of Mathematics at the University of California Santa Barbara. 
The author would like to thank the department for its hospitality and supportive environment.

\end{document}